\documentclass[a4, 12pt]{amsart}
\usepackage{a4wide,amssymb}
\usepackage{yfonts}
\usepackage{hyperref}
 \usepackage{tikz}
  \usetikzlibrary{matrix,arrows,decorations.pathmorphing}
 
\renewcommand{\subset}{\subseteq}

\newtheorem*{remark*}{Remark}

\newtheorem{proposition}{Proposition}[section]
\newtheorem{corollary}{Corollary}[section]
\newtheorem{lemma}{Lemma}[section]

\newtheorem{theorem}{Theorem}[section]
\newtheorem*{theorem*}{Theorem}
\newtheorem{statement*}{Statement}

\makeatletter

\title{Duflo's conjecture for the branching to the Iwasawa $AN$-subgroup }
\author[Liu]{Gang Liu}
\address{G. Liu, Leibniz Universit\"{a}t Hannover, Institut f\"{u}r Analysis
, Welfengarten 1, 30167 Hannover Germany.}
\email{liu@math.uni-hannover.de}

\begin{document}
%\large
%
%\def\section
%{\@startsection {section}{1}
%{-1pt}{-5ex \@plus -1ex \@minus -.2ex}
%{2ex \@plus .2ex}
%{\normalfont \large \bfseries } }

\maketitle

\begin{abstract}
The purpose of this paper is to prove Duflo's conjecture for $(G,\pi, AN)$ where $G$ is a simple Lie group of Hermitian type  and $\pi$ is a discrete series of $G$ and $AN$ is the maximal exponential solvable subgroup for an Iwasawa decomposition $G=KAN$. This is essentially reduced from the following general theorem we prove in this paper:  let $G$ be a connected semisimple Lie group . Then a strongly elliptic $G$-coadjoint orbit $\mathcal{O}$ is holomorphic if and only if  $\text{p}(\mathcal{O})$ is an open $AN$-coadjoint orbit, where $\text{p} : \mathfrak{g}^* \longrightarrow (\mathfrak{a}\oplus\mathfrak{n})^*$ is the natural projection. \end{abstract}

\section{Introduction}

Let $H\subset G$ be  real connected Lie groups of type \textrm{I} with Lie algebras $\mathfrak{h}\subset\mathfrak{g}$. Let $\pi$ be a unitary irreducible representation of $G$. One fundamental problem in representation theory and harmonic analysis is to study the restriction of $\pi$ to $H$ i.e., the branching problem. For $G$ exponential solvable, the branching problem was determined in (\cite{Fu}). However, it is very hard to find an explicit branching laws for general $(G,\pi, H)$, especially for $G$ reductive and $H$ non reductive. When $G, H$ are both reductive, good progress has been made, notably by  work of Kobayashi (\cite{Ko1}, \cite{Ko2}, \cite{Ko3}, \cite{KOP}) and recent work of Duflo-Vargas (\cite{DV1}, \cite{DV2}).

A central problem in branching theory, initiated by Kobayashi, is to study when $\pi\vert_{H}$ is $H$-admissible (in the sense of Kobayashi): i.e., $\pi\vert_{H}$ is decomposed discretely with finite multiplicities.

Now let us consider the branching problem geometrically. Suppose that $\pi$ is attached to a $G$-coadjoint orbit $\mathcal{O}$ in $\mathfrak{g}^*$: i.e., $\pi$ is a "quantization" of $\mathcal{O}$. Then $\mathcal{O}$, equipped with the Kirillov-Kostant-Souriau symplectic form $\varpi$, becomes a $H$-Hamiltonian space. The corresponding moment map is just the natural projection $\text{p}: \mathcal{O}\longrightarrow \mathfrak{h}^*$. One might care whether the branching law $\pi\vert_{H}$ can be studied via the $H$-Hamiltonian space $(\mathcal{O}, \varpi)$.

The answer is positive for $G$ exponential solvable (by the work of Fujiwara \cite{Fu}) or compact (by work of Heckman \cite{He} and Guillemin-Sternberg \cite{GS}). But for general $G$, the answer is not that clear : for instance not all $\pi\in\widehat{H}$ can be associated with a coadjoint orbit. Next even if $\pi$ is attached to a certain orbit $\mathcal{O}$, it is not clear that each $H$-irreducible representation which appears in $\pi\vert_{H}$ can be attached to a $H$-coadjoint orbit in $\mathfrak{h}^*$. Nevertheless, for $H\subset G$ which are almost algebraic Lie groups and $\pi$ is a \textit{discrete series} of $G$,  Duflo proposes a conjecture which relates the branching problem to the geometry of the moment map.

 For $G$ semisimple and $\pi$ a discrete series of $G$ which is attached to a $G$-coadjoint orbit $\mathcal{O}_{\pi}$ (in the sense of Duflo), Duflo's conjecture states as follows:\\

\noindent (D1) $\pi\vert_{H}$ is $H$-admissible if and only if the projection $\text{p}$ is \textit{weakly proper}.

 \noindent (D2) If $\pi\vert_{H}$ is $H$-admissible, then each irreducible $H$-representation $\sigma$ which appears in $\pi\vert_{H}$ is attached to a \textit{strongly regular} $H$-coadjoint orbit $\Omega$ (in the sense of Duflo) which is contained in $\text{p}(\mathcal{O}_{\pi})$.

 \noindent (D3) If $\pi\vert_{H}$ is $H$-admissible, the multiplicity of each such $\sigma$  can be expressed geometrically on the reduced space of $\Omega$ (with respect to the moment map $\text{p}$).\\

  Here "weakly proper" in (D1) means that the preimage (for $\text{p}$) of each compact subset which is contained in $\text{p}(\mathcal{O}_{\pi})\cap\Upsilon_{sr}$ is compact in $\mathcal{O}_{\pi}$. Here $\Upsilon_{sr}$ is the set of all strongly regular elements in  $\mathfrak{h}^*$. For the definition of strongly regular elements (orbits) and more information on Duflo's conjecture, we refer to (\cite{Liu}).

   As for (D2), let us remark that in the framework of Duflo's orbit method, each discrete series of $G$ (resp. $H$) is attached to a strongly regular $G$ (resp. $H$)-coadjoint orbit. Moreover according to Duflo-Vargas's work (\cite{DV1}, \cite{DV2}), each irreducible $H$-representation $\widetilde{\sigma}$ which appears in the integral decomposition of  $\pi\vert_{H}$ (which is not necessarily $H$-admissible) is attached to a strongly regular $H$-coadjoint orbit. Note that $\widetilde{\sigma}$ is not necessarily a discrete series. However, if $\pi\vert_{H}$ is $H$-admissible, then each $H$-irreducible representation appearing in $\pi\vert_{H}$ must be a discrete series. Thus  (D2) has a good geometric meaning.

  (D3) seems not very explicit, but at least it suggests a direction. It is obviously influenced by Guillemin-Sternberg's work for compact Hamiltonian spaces and especially encouraged by Paradan's work on non-compact Hamiltonian spaces. We will see in our case, this direction is also correct.

 In this paper, we will prove Duflo's conjecture for $(G, AN)$ where $G$ is a simple Lie group of Hermitian type  and $AN$ is the maximal exponential solvable subgroup for an Iwasawa decomposition $G=KAN$.

 The paper is organized as follows. In section 2, assuming the results in  section 5, we prove Duflo's conjecture for the case mentioned above.  In section 3, we state a general geometric theorem (Theorem 3.1) on strongly elliptic coadjoint orbits.  The sections 4 and 5 are devoted to proving Theorem 3.1.\\

\noindent \textbf{Notations and Conventions}: $\mathbb{R}^{*}_+=\{x: x>0\}$,  $\mathbb{R}^{*}_-=\{x: x<0\}$. For any Lie algebra $\mathfrak{g}$, $\mathfrak{g}^*$ denotes its algebraic dual.

\section{Duflo's conjecture for simple Lie groups of Hermitian type }

Let $G=KAN$ be an Iwasawa decomposition of a connected semisimple Lie group $G$. Let $\mathfrak{a}$ (resp. $\mathfrak{n}$) be the Lie algebra of $A$ (resp. $N$). Let $\pi$ be a discrete series of $G$ with $\mathcal{O}_{\pi}$ its associated coadjoint orbit (in the sense of Duflo). Suppose that there exists an open $AN$-coadjoint orbit in $(\mathfrak{a}\oplus\mathfrak{n})^*$. Then in Th\'{e}or\`{e}me 5.3 of \cite{Liu}, we proved

\begin{theorem}
   The projection $\text{p}: \mathcal{O}_{\pi}\longrightarrow (\mathfrak{a}\oplus\mathfrak{n})^*$ is weakly proper if and only if $\text{p}(\mathcal{O}_{\pi})$ is an open $AN$-coadjoint orbit.
\end{theorem}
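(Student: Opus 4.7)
The plan is to exploit the Iwasawa decomposition $G=NAK$ together with the $AN$-equivariance of the moment map $p$. One first verifies that $\mathfrak{a}\oplus\mathfrak{n}$ is $\mathrm{Ad}(AN)$-stable (using $[\mathfrak{a},\mathfrak{n}]\subset\mathfrak{n}$ and that $\mathfrak{n}$ is a subalgebra), so that $p$ intertwines the coadjoint $AN$-actions on the two sides. Fix $f\in\mathcal{O}_\pi$; since $\pi$ is a discrete series the orbit is strongly elliptic and the $G$-stabilizer $G(f)$ is compact. Writing every point of $\mathcal{O}_\pi$ in the form $nak\cdot f$ yields the key identity
\begin{equation*}
p(nak\cdot f)=na\cdot p(k\cdot f),
\end{equation*}
which reduces the analysis of the fibers of $p$ to a $K$-parameterized family of $AN$-orbit computations in $(\mathfrak{a}\oplus\mathfrak{n})^*$.

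For the implication $\Omega:=p(\mathcal{O}_\pi)$ open $AN$-orbit $\Rightarrow$ $p$ weakly proper, note that since $AN$ is simply connected exponential solvable the stabilizer at any point of an open orbit is trivial, so $AN\to\Omega$ is a diffeomorphism. Given a compact $\mathcal{K}\subset\Omega\cap\Upsilon_{sr}$ and fixed $k\in K$, the set of $(n,a)$ with $na\cdot p(k\cdot f)\in\mathcal{K}$ is the preimage of $\mathcal{K}$ under this diffeomorphism and is therefore compact, varying continuously with $k$. Combined with compactness of $K$ and of $G(f)$, this yields compactness of $p^{-1}(\mathcal{K})$ in $\mathcal{O}_\pi$.

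For the converse direction, assume $p$ is weakly proper and denote by $\Omega_0$ the open $AN$-orbit given by hypothesis; strongly regular $AN$-elements lie inside the union of such open orbits. I would first argue via a dimension count in the Iwasawa parameterization that $p(\mathcal{O}_\pi)$ meets $\Omega_0$, so that the weak properness hypothesis is non-vacuous, and then rule out the possibility that $p(\mathcal{O}_\pi)$ contains boundary $AN$-orbits. On a boundary orbit the $AN$-stabilizer is positive-dimensional, and one-parameter subgroups of such stabilizers should generate sequences in $p^{-1}(\mathcal{K})$ (for a suitable compact $\mathcal{K}\subset\Omega_0\cap\Upsilon_{sr}$) escaping to infinity in $\mathcal{O}_\pi$, contradicting weak properness.

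The principal obstacle is this converse: translating the analytic weak-properness condition into a geometric statement that forbids boundary orbits requires a delicate analysis of how points of $\mathcal{O}_\pi$ approach boundary strata of $\Omega_0$ under $p$, and how stabilizer jumps on these strata produce unbounded directions in the fibers of $p$. Making this rigorous involves choosing the compact test set $\mathcal{K}$ carefully and controlling both the $K$- and $AN$-components of the Iwasawa decomposition simultaneously. Once this boundary/stabilizer bookkeeping is pinned down, the compactness argument from the ``if'' direction closes both implications.
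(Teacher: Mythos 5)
First, a point of reference: the paper does not actually prove this statement here --- it is quoted from Th\'eor\`eme 5.3 of \cite{Liu} --- so there is no in-paper proof to compare against; your proposal has to stand on its own. Your ``$\Longleftarrow$'' direction does stand: the identity $\text{p}(nak\cdot f)=na\cdot \text{p}(k\cdot f)$, the triviality of $AN$-stabilizers on an open orbit (stabilizers in an exponential solvable group are connected, hence trivial when discrete), and compactness of $K$ together show that $\text{p}^{-1}(\mathcal{K})$ is the continuous image of the compact set $\{(k,b)\in K\times AN: b\cdot \text{p}(k\cdot f)\in\mathcal{K}\}$. This is a complete argument once the ``varies continuously with $k$'' remark is upgraded to the observation that the division map of a simply transitive action is continuous.

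The converse, however, has a genuine gap, and it is exactly where you flag it. Your plan is to exhibit a boundary $AN$-orbit in $\text{p}(\mathcal{O}_{\pi})$ with positive-dimensional stabilizer and use it to produce an escaping sequence; but the non-compact set this produces sits inside $\text{p}^{-1}(\lambda_0)$ for $\lambda_0\notin\Upsilon_{sr}$, and weak properness only constrains preimages of compacta contained in $\text{p}(\mathcal{O}_{\pi})\cap\Upsilon_{sr}$. So no contradiction is reached until you transport that escaping sequence over a \emph{fixed} compact subset of an open orbit, which is the entire difficulty and is left unresolved. A route that closes this without any boundary analysis: since $G(f)$ is compact (strong ellipticity), $AN$ acts on $\mathcal{O}_{\pi}\cong G/G(f)$ freely \emph{and properly}; freeness makes $\text{p}$ a submersion, so $\text{p}(\mathcal{O}_{\pi})$ is open and meets the dense set $\Upsilon_{sr}$, i.e.\ meets some open orbit $\Omega_0$. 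Weak properness applied to the singleton $\{\lambda\}$, $\lambda\in\Omega_0$, gives $F:=\text{p}^{-1}(\lambda)$ compact, and equivariance plus simple transitivity give $\text{p}^{-1}(\Omega_0)=AN\cdot F\cong AN\times F$. Properness of the action then makes $AN\times F\to\mathcal{O}_{\pi}$ a proper map, so $\text{p}^{-1}(\Omega_0)$ is closed as well as open in the connected set $\mathcal{O}_{\pi}$, hence equals $\mathcal{O}_{\pi}$ and $\text{p}(\mathcal{O}_{\pi})=\Omega_0$. I recommend replacing your boundary-stratum analysis by this open--closed argument.
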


Now suppose that $G$ is simple of Hermitian type. In this section, we will prove Duflo's conjecture for $(G, AN)$. It is well known that for such Lie group $G$, there exists an open $AN$-coadjoint orbit in $(\mathfrak{a}\oplus\mathfrak{n})^*$.

On the other hand it is known that $\pi\vert_{AN}$ is $AN$-admissible if and only if $\pi$ is holomorphic or anti-holomorphic (see \cite{Riv} and the Theorem 4.6 of \cite{Rev}).

Hence (D1) of Duflo's conjecture follows from Theorem 2.1 and Theorem 3.1 below which states that $\text{p}(\mathcal{O}_{\pi})$  is an open $AN$-coadjoint orbit if and only if $\pi$ is holomorphic or anti-holomorphic.

Next let us check (D2). Let $G=K\exp{\mathfrak{p}}$ be the associated Cartan decomposition of $G$ and $\mathfrak{t}$ be a Cartan sub-algebra for $K$. Let $\pi$ be a holomorphic discrete series of $G$. Without loss of generality, we can suppose that its Harish-Chandra parameter $\lambda$ is in $(i\mathfrak{t})^*$. Let $f=-i\lambda\in \mathfrak{t}^*\subset \mathfrak{g}^*$ (the relation $\mathfrak{t}^*\subset \mathfrak{g}^*$ is relative to the decomposition $\mathfrak{g}=\mathfrak{t}\oplus [\mathfrak{t},\mathfrak{g}]$). Then $\mathcal{O}_{\pi}=G.f$ is the associated coadjoint orbit of $\pi$ (in the sense of Duflo).
Let $\Delta$ be the root system with respect to $(\mathfrak{g}_{\mathbb{C}},\mathfrak{t}_{\mathbb{C}})$. We identify $ \mathfrak{t}^*$ with $\mathfrak{t}$ under the inner product $\langle.,.\rangle:=-\mathcal{K}(.,.)_{\mathfrak{t}\times\mathfrak{t}}$, where $\mathcal{K}(.,.)$ is the Killing form on $\mathfrak{g}$.
 Then $\Delta^+:=\{\alpha\in \Delta: \langle f, i\alpha \rangle> 0\}$ is a subset of positive roots. Let $\Delta_{n}^+\subset\Delta^+$ (resp. $\Delta_{c}^+\subset\Delta^+$) be the subset of positive non-compact (resp. compact) roots. Then $\Delta_{n}^+$ determines the holomorphic structure $\mathfrak{p}^+_{\mathbb{C}}$ in $\mathfrak{p}_{\mathbb{C}}$. Hence there exists a unique element $Z_0$ in the center of $\mathfrak{k}$ such that $[Z_0, X]= -iX$ for all $X\in\mathfrak{p}^+_{\mathbb{C}}$. Then $ \langle Z_0, i\alpha \rangle=1> 0$ for all $\alpha\in \Delta_{n}^+$. Let $h\in (\mathfrak{a}\oplus\mathfrak{n})^*$  such that $h(Y)=-\mathcal{K}(Z_0, Y)$ for all $Y\in\mathfrak{a}\oplus\mathfrak{n}$.  Then it is known that $h$ lies in an open $AN$-coadjoint orbit $\Omega$ (see for instance \cite{Riv}).

 Let $\Lambda=\lambda+\rho_G-2\rho_{K}$ be the Blattner parameter of $\pi$, where $\rho_G$ (resp. $\rho_{K}$) is the half sum of positive roots (resp. compact positive roots). Let $\pi_{\Omega}\in \widehat{AN}$  be the associated unitary irreducible representation of $\Omega=AN.h$. Let $\tau_{\Lambda}\in \widehat{K}$ be the $K$-unitary irreducible representation whose highest weight is $\Lambda$ (with respect to $\Delta_{c}^+$). Then we have the following theorem due to Rossi-Vergne (see \cite{Riv}).

\begin{theorem}
 $\pi\vert_{AN}=\dim(\tau_{\Lambda}).\pi_{\Omega}$.
\end{theorem}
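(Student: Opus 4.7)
The plan is to use a concrete geometric model of the holomorphic discrete series $\pi$, exploiting the fact that $AN$ acts simply transitively on the bounded symmetric domain $D=G/K$. I would realize $\pi$ on the Hilbert space $\mathcal{H}_\pi$ of $L^2$ holomorphic sections of the homogeneous vector bundle $G\times_{K}V_{\Lambda}\to D$, where $V_\Lambda$ carries $\tau_\Lambda$ (the minimal $K$-type) and the inner product is given by a suitable Bergman-type kernel. Under the Harish-Chandra embedding $D\hookrightarrow \mathfrak{p}^{+}_{\mathbb{C}}$, the base point $eK$ corresponds to $0$, and since $AN\cap K=\{e\}$ the orbit map $an\mapsto an\cdot 0$ is a diffeomorphism $AN\xrightarrow{\sim}D$.

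Trivializing the bundle along this $AN$-orbit identifies $\mathcal{H}_\pi$ with a closed subspace of $V_\Lambda$-valued functions on $AN$ which satisfy the Cauchy--Riemann equations induced by the invariant complex structure on $D$; the $AN$-action becomes the left regular action twisted by the bundle cocycle of the $K$-representation. Next I would construct $\pi_\Omega$ via Kirillov's orbit method for the exponential solvable group $AN$: choose a complex polarization of $(\mathfrak{a}\oplus\mathfrak{n})_{\mathbb{C}}$ at $h$ coming from the invariant complex structure on $D\simeq AN$, giving a realization of $\pi_\Omega$ as an $L^2$ space of scalar holomorphic functions on $AN$ attached to the open orbit $\Omega=AN\cdot h$. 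A choice of basis of $V_\Lambda$ decouples the $V_\Lambda$-valued holomorphicity condition into $\dim\tau_\Lambda$ independent scalar conditions, each of which yields a copy of $\pi_\Omega$; this would give $\pi\vert_{AN}=\dim(\tau_\Lambda)\cdot\pi_\Omega$.

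The main obstacle is the matching of unitary structures in the last step. One has to verify that the $G$-invariant Bergman-type norm on $\mathcal{H}_\pi$, written in the $AN$-coordinates, factors as the Hilbert tensor product of the inner product on $V_\Lambda$ with the correct $AN$-invariant norm realizing $\pi_\Omega$. This reduces to a change-of-variables computation combining the Jacobian of $AN\xrightarrow{\sim}D$, the bundle cocycle and the Bergman kernel, together with a check that the resulting character of the center of the enveloping algebra of $\mathfrak{a}\oplus\mathfrak{n}$ coincides with the one prescribed by $h$ via Kirillov's formula. Once this unitary matching is carried out, the irreducibility of each scalar factor (which follows from the openness of $\Omega$ and Kirillov's theory for exponential solvable groups) immediately yields the claimed isotypic decomposition with multiplicity $\dim\tau_\Lambda$.
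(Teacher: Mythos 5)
The statement you are proving is not proved in the paper at all: it is quoted as a known theorem of Rossi--Vergne (the reference [RiV], whose main subject is precisely the realization of holomorphic discrete series on Hilbert spaces of holomorphic functions attached to the solvable group $AN$). So your proposal is an attempt to reprove that result, and your geometric setting (Harish--Chandra realization on $D=G/K$, simple transitivity of $AN$ on $D$, orbit-method construction of $\pi_{\Omega}$ from a complex polarization at $h$) is indeed the setting in which Rossi and Vergne work.

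However, there is a genuine gap at the decisive step, namely the claim that ``a choice of basis of $V_{\Lambda}$ decouples the $V_{\Lambda}$-valued holomorphicity condition into $\dim\tau_{\Lambda}$ independent scalar conditions, each of which yields a copy of $\pi_{\Omega}$.'' In the holomorphic trivialization over $D\subset\mathfrak{p}^{+}_{\mathbb{C}}$ the Cauchy--Riemann condition does decouple componentwise, but the $AN$-action does not: for $g\in AN$ the action is $(\pi(g)F)(z)=\tau_{\Lambda}\bigl(J(g^{-1},z)\bigr)^{-1}F(g^{-1}z)$, where the canonical automorphy factor $J(g,z)$ takes values in $K_{\mathbb{C}}$ and $\tau_{\Lambda}(J(g,z))$ is a non-scalar, $z$-dependent operator on $V_{\Lambda}$ whenever $\dim\tau_{\Lambda}>1$ (already for $SU(2,1)$). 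Hence the $\dim V_{\Lambda}$ scalar subspaces obtained from a basis of $V_{\Lambda}$ are not $AN$-invariant, and the asserted isotypic decomposition does not follow from this step; the difficulty is not merely the ``matching of unitary structures'' you flag at the end. To close the gap one needs the actual content of Rossi--Vergne's analysis, e.g.\ a factorization of the reproducing kernel of $\mathcal{H}_{\pi}$ exhibiting an $AN$-equivariant unitary $\mathcal{H}_{\pi}\cong\mathcal{H}_{\Omega}\otimes V_{\Lambda}$ with $AN$ acting trivially on the second factor, or equivalently a computation showing that the commutant of $\pi(AN)$ in $\mathcal{B}(\mathcal{H}_{\pi})$ is isomorphic to $\End(V_{\Lambda})$ while the restriction is quasi-equivalent to the single irreducible $\pi_{\Omega}$. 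A further point left unaddressed is that $\Omega=AN\cdot h$ with $h$ built from the central element $Z_{0}$ is independent of the Harish-Chandra parameter $\lambda$, so one must also verify that the scalar representation produced by your polarization at $h$ is the same $\pi_{\Omega}$ for every holomorphic $\pi$; this is part of what the kernel factorization delivers and is not automatic from the sketch.
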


 However, according to section 5, we have $\text{p}(\mathcal{O}_{\pi})=\Omega$. This is also directly deduced  from  Theorem 3.1 and Carmona's results (see section 4). For $\pi$ anti-holomorphic, it is treated exactly in the same way. Hence (D2) of Duflo's conjecture is true. Note that in our case, strongly regular $AN$-coadjoint orbits are nothing else but open orbits.

 Below we will prove (D3) of Duflo's conjecture.

\subsection{Reduced space and multiplicity }

Let $\tau_{\Lambda'}\in\widehat{K}$ be the unitary irreducible representation of $K$ with Harish-Chandra parameter $\lambda$ (with respect to $\Delta_{c}^+$). Then the highest weight of $\tau_{\Lambda'}$, $\Lambda'=\lambda-\rho_{K}$.  As ${\Lambda}-\Lambda'=\rho_n$ which is a character of $\exp(\mathfrak{t})$, we have  \\

\noindent \textbf{Observation}: $\dim(\tau_{\Lambda})=\dim(\tau_{\Lambda'})$.\\

Let $\varpi$ be the Kirillov-Kostant-Souriau form of $\mathcal{O}_{\pi}$ and $X_{\Omega}$ be the reduced space of  the open $AN$-orbit $\Omega=\text{p}(\mathcal{O}_{\pi})$. Since $AN$ is diffeomorphic to $\Omega$, we deduce that $X_{\Omega}$ is diffeomorphic to $K.f$. Then in particular $X_{\Omega}$ is a compact symplectic sub-manifold of $\mathcal{O}_{\pi}$.  Denote by $\varpi_{\Omega}$ the induced symplectic form of $X_{\Omega}$ (from $\varpi$), and $\beta_{\Omega}:=\frac{\varpi_{\Omega}^l}{(2\pi)^l.(l)!}$ the associated Liouville volume. Here $l=\dim{X_{\Omega}}=\dim{K.f}$.

Now we will prove the following theorem.

\begin{theorem}
 $\pi\vert_{AN}=(\int_{X_{\Omega}}\beta_{\Omega}).\pi_{\Omega}$.
\end{theorem}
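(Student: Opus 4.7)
The plan is to reduce the equality to Kirillov's dimension formula applied to the compact coadjoint orbit $K.f$. By Theorem 2.2 and the Observation above, we already know
$$\pi|_{AN} = \dim(\tau_{\Lambda})\cdot\pi_{\Omega} = \dim(\tau_{\Lambda'})\cdot\pi_{\Omega},$$
so the statement reduces to the purely symplectic identity
$$\int_{X_{\Omega}} \beta_{\Omega} = \dim(\tau_{\Lambda'}).$$

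First, I would promote the diffeomorphism $X_{\Omega}\simeq K.f$ recorded above to a symplectomorphism onto the $K$-coadjoint orbit $K.f$ equipped with its Kirillov--Kostant--Souriau form $\omega_{KKS}$. Because $AN$ acts simply transitively on the open orbit $\Omega$, every point of $\Omega$ is a regular value of $\text{p}$ with trivial $AN$-stabilizer, so Marsden--Weinstein reduction gives $X_{\Omega}=\text{p}^{-1}(h)$ with $\varpi_{\Omega}$ simply the restriction of $\varpi$. Using the Iwasawa splitting $\mathfrak{g}=\mathfrak{k}\oplus\mathfrak{a}\oplus\mathfrak{n}$ together with the definitions of $f$ and $h$, one checks at the base point $f$ that the tangent space $T_{f}\text{p}^{-1}(h)$ inside $T_{f}\mathcal{O}_{\pi}\cong\mathfrak{g}/\mathfrak{t}$ coincides with $T_{f}(K.f)\cong\mathfrak{k}/\mathfrak{t}$, and that on this subspace the restriction of the KKS form of $\mathcal{O}_{\pi}$ reduces to the KKS form of the $K$-orbit $K.f$; $K$-equivariance then propagates this pointwise identification to the whole of $K.f$.

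Once $X_{\Omega}$ is identified with $(K.f,\omega_{KKS})$ as a symplectic manifold, the Liouville integral becomes a standard computation. The element $f=-i\lambda\in\mathfrak{t}^{*}$ is regular (as $\lambda$ is the Harish-Chandra parameter of a discrete series) and integral, and with respect to the chamber fixed by $\Delta_{c}^{+}$ the associated irreducible $K$-representation has Harish-Chandra parameter $\lambda$, hence is exactly $\tau_{\Lambda'}$. Kirillov's dimension formula (equivalent to the Weyl dimension formula for an integral regular element, or obtainable via Duistermaat--Heckman localization at the $T$-fixed points) then yields
$$\int_{K.f}\frac{\omega_{KKS}^{l}}{(2\pi)^{l}\,l!}=\dim(\tau_{\Lambda'}),$$
and combining with the reduction above completes the argument.

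The main obstacle will be the symplectic comparison in the second step: one has to verify that the Marsden--Weinstein reduced form $\varpi_{\Omega}$ is exactly the KKS form of $K.f$, with no shift such as a $\rho_{K}$-correction. This amounts to a concrete linear-algebra check inside $\mathfrak{g}/\mathfrak{t}$ using the Iwasawa decomposition. Once this is settled, Kirillov's formula is classical, and the anti-holomorphic case is handled by the same argument after replacing $\Delta_{n}^{+}$ with its opposite.
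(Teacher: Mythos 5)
Your proposal follows essentially the same route as the paper: reduce via Rossi--Vergne and the Observation to the identity $\int_{X_{\Omega}}\beta_{\Omega}=\dim(\tau_{\Lambda'})$, identify the reduced space symplectically with the $K$-coadjoint orbit through $f\vert_{\mathfrak{k}}$, and conclude by the Kirillov--Rossmann dimension formula. The only detail handled differently is integrality of $f$, which you assert but the paper secures by passing to a suitable covering of $G$ (harmless for multiplicities); otherwise the arguments coincide.
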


\begin{proof}
According to the theorem of Rossi-Vergne (Theorem 2.2 above) and the observation above, it is sufficient to prove $\int_{X_{\Omega}}\beta_{\Omega}=\dim(\tau_{\Lambda'})$. Without loss of generality, we can assume that $f$ is also integral (i.e. there exists a unitary character $\chi_f$ of $G(f)=\mathbb{T}:=\exp{\mathfrak{t}}$, such that $d\chi_f=if$). Because otherwise, we can always choose a good covering $\widetilde{G}$ of $G$ such that $f$ is integral for $\widetilde{G}$ (and of course coverings do not change anything about multiplicities). As in (\cite{Liu})for $SU(2,1)$ case, we can deduce that $\int_{X_{\Omega}}\beta_{\Omega}=\int_{K.f} \beta_{K}$, where $\beta_{K}$ is the Liouville volume of $K.f$ for the induced symplectic form $\varpi_K$ on $K.f$ (from the symplectic form $\varpi$ of $\mathcal{O}_{\pi}$). However it is clear that $(K.f, \varpi_K)$ is isomorphic to the $K$-coadjoint orbit on $(K.f_K,\varpi_{f_K})$,  where $f_K=f\vert_{\mathfrak{k}}\in\mathfrak{k}^*$ and $\varpi_{f_K}$ is the Kirillov-Kostant-Souriau symplectic form on $K.f_K$. Thus it is clear that $\int_{K.f} \beta_{K}=\int_{K.f_K} \beta_{f_K}$, where $\beta_{f_K}$ is the Liouville volume for $(K.f_K,\varpi_{f_K})$. On the other hand, it is clear that the associated irreducible unitary representation for $(K.f_K,\varpi_{f_K})$ is exactly $\tau_{\Lambda'}$. Hence according to \textit{Kirillov-Rossmann's formula} (see \cite{DHV}), $\int_{K.f_K} \beta_{f_K}=\dim(\tau_{\Lambda'})$. Thus the theorem is proved.
\end{proof}

So the assertion (iii) is true according to the the above theorem. \\

\noindent \textbf{Remark}. From the previous theorem, we see that the $AN$-multiplicity equals a natural integral on the reduced space. However, it also equals "very probably" an equivariant \textit{$\text{Spin}_c$-index} on the reduced space which is reduced from the \textit{$\text{Spin}_c$-quantization} of the $G$-orbit $\mathcal{O}_{\pi}$. This equivariant index is the so-called \textit{reduction}. In other words, in this situation, the principle \textit{quantization commutes with reduction} holds. Hence this geometric principle is extended to Hamiltonian action of non-reductive Lie groups.

\section{A geometric theorem for strongly elliptic coadjoint orbits}

Let $G$ be a real connected semisimple Lie group, $\mathfrak{g}=\text{Lie}(G)$. We let $G$ act on $\mathfrak{g}^*$ by coadjoint action. Recall that an element $f\in \mathfrak{g}^*$ is called \textit{strongly elliptic}, if the Lie algebra of its stabilizer, $\mathfrak{g}(f)$ is  compact.

Now let $f$ be strongly elliptic. Then $\mathfrak{g}(f)$ contains a compact Cartan sub-algebra $\mathfrak{t}$ (conversely, if $\mathfrak{g}$ has a compact Cartan sub-algebra, then the set of strongly elliptic elements is not empty).  Since $\mathfrak{g}=\mathfrak{t}\oplus[\mathfrak{t}, \mathfrak{g}]$, and $f$ vanishes on $[\mathfrak{t}, \mathfrak{g}]$, we can  regard $f\in \mathfrak{t}^*$.  Let $\Delta$ be the root system with respect to $(\mathfrak{g}_{\mathbb{C}},\mathfrak{t}_{\mathbb{C}})$ and $G=K\exp(\mathfrak{p})$ be the associated Cartan decomposition. Let $\Delta_c$ (resp. $\Delta_n$) be the subset of compact (resp. noncompact) roots of $\Delta$. It is not hard to see that for each $\alpha\in\Delta_n$, we have $ \langle f, i\alpha\rangle\neq0$. Define the subset $\Delta_n^+=\{\alpha\in\Delta_n: \langle f, i\alpha\rangle>0\}$  where $\langle, \rangle$ is the inner product over $\mathfrak{t}^*\cong\mathfrak{t}$ deduced from the Killing form defined in section 2.

We say a strongly elliptic element $f\in\mathfrak{g}^*$ is \textit{holomorphic}, if $\Sigma_{\alpha\in \Delta_{n}^+} \mathfrak{g}_\alpha$ is a (abelian) sub-algebra of $\mathfrak{p}_{\mathbb{C}}$. Here $\mathfrak{g}_\alpha$ is the root space of $\alpha$. Then it is well known that $f$ is holomorphic if and only if $\Delta_{n}^+$ is stable under the compact Weyl group $W_K$. Notice that the existence of a strongly elliptic and holomorphic element implies that $\mathfrak{g}$ is of Hermitian type.

  A coadjoint orbit $\mathcal{O}$ (in $\mathfrak{g}^*$) is called strongly elliptic if an element (then each element) in $\mathcal{O}$ is strongly elliptic. A strongly regular orbit is called holomorphic, if an element (then each element) in it is holomorphic. Note that the subset of strongly elliptic (resp. strongly elliptic and holomorphic) elements is a $G$-invariant cone, if it is non-empty.

In the framework of orbit method, each discrete series $\pi$ of $G$ is associated to  a (unique) coadjoint orbit $\mathcal{O}$ which is regular and strongly elliptic (in the sense of Duflo). Moreover $\pi$ is holomorphic if and only if $\mathcal{O}$ is holomorphic. Note that a regular and strongly elliptic coadjoint orbit is strongly regular.

Our goal is to prove the following Theorem:

%{\parindent 0pt
%{\bf Theorem $\mathbf 1$:}
\begin{theorem} \label{holo}
 Let $G=KAN$ be an Iwasawa decomposition of a connected semisimple Lie group $G$ with Lie algebra $\mathfrak{g}$.  Let $\mathfrak{a}=\text{Lie}(A)$ and $\mathfrak{n}=\text{Lie}(N)$. Let $\text{p} : \mathfrak{g}^* \longrightarrow (\mathfrak{a}\oplus\mathfrak{n})^*$ be the natural projection. Assume that $f\in\mathfrak{g}^*$ is a strongly elliptic element with coadjoint orbit $\mathcal{O}_{f}:=G.f$. Then $f$ is holomorphic if and only if $\text{p}(\mathcal{O}_{f})$ is an open $AN$-coadjoint orbit in $(\mathfrak{a}\oplus\mathfrak{n})^*$.
\end{theorem}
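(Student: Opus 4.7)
The strategy is to split the conclusion ``$\text{p}(\mathcal{O}_f)$ is an open $AN$-coadjoint orbit'' into two pieces — openness of the image in $(\mathfrak{a}\oplus\mathfrak{n})^*$ and being a single orbit — and then reduce to a purely algebraic statement about the skew form $B_f(Y,Z) := f([Y,Z])$.

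First, openness is automatic (independent of holomorphy). At $x = g\cdot f\in\mathcal{O}_f$ the restricted differential $d\text{p}_x\colon T_x\mathcal{O}_f\to(\mathfrak{a}\oplus\mathfrak{n})^*$ is identified with the linear map $X\mapsto\bigl(Y\mapsto x([Y,X])\bigr)$ from $\mathfrak{g}$, and a rank calculation shows this is surjective exactly when $\mathfrak{g}(x)\cap(\mathfrak{a}\oplus\mathfrak{n})=0$. But $\mathfrak{g}(x)=\text{Ad}(g)\mathfrak{g}(f)$ is the Lie algebra of a compact subgroup of $G$ (since $G(f)$ is reductive with compact Lie algebra, hence compact), so its elements have purely imaginary $\text{ad}$-eigenvalues; while $\mathfrak{a}\oplus\mathfrak{n}$ is split solvable, so in a restricted-root basis $\text{ad}$ of any element is upper triangular with real diagonal, giving only real eigenvalues. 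An element in the intersection acts by zero and is therefore central and thus zero, so $\text{p}(\mathcal{O}_f)$ is always open in $(\mathfrak{a}\oplus\mathfrak{n})^*$.

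Next, using $G=ANK$ and the $AN$-equivariance of $\text{p}$, we get $\text{p}(\mathcal{O}_f)=AN\cdot\text{p}(K\cdot f)$. The regular stratum $U_{\mathrm{reg}}$ of $(\mathfrak{a}\oplus\mathfrak{n})^*$ (points with trivial stabilizer in $\mathfrak{a}\oplus\mathfrak{n}$) is open and $AN$-invariant, and its connected components are precisely the open $AN$-orbits. Since $\text{p}(K\cdot f)$ is connected and compact, $\text{p}(\mathcal{O}_f)$ equals a single open $AN$-orbit if and only if $\text{p}(K\cdot f)\subset U_{\mathrm{reg}}$, equivalently if for every $k\in K$ the restricted skew form $B_{k\cdot f}|_{\mathfrak{a}\oplus\mathfrak{n}}$ is non-degenerate. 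Pulling back along $\text{Ad}(k^{-1})$ and invoking the $K$-conjugacy of Iwasawa subalgebras, the theorem reduces to the purely algebraic equivalence: $f$ is holomorphic if and only if $B_f$ is non-degenerate on every Iwasawa subalgebra of $\mathfrak{g}$.

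To prove this equivalence I would work in the root decomposition of $\mathfrak{g}_{\mathbb{C}}$ with respect to the compact Cartan $\mathfrak{t}\subset\mathfrak{g}(f)$. Since $f\in\mathfrak{t}^*$, $B_f$ pairs $\mathfrak{g}_\alpha$ with $\mathfrak{g}_{-\alpha}$ with coupling $\langle f,h_\alpha\rangle$, non-zero for each root not annihilating $f$. For ``not holomorphic $\Rightarrow$ degenerate somewhere'': if $\mathfrak{g}$ is not Hermitian, Carmona's theorem from Section 4 rules out open $AN$-orbits entirely and gives the conclusion by contraposition; otherwise $\Delta_n^+$ fails $W_K$-stability, so one picks $s\in W_K$ and $\alpha\in\Delta_n^+$ with $s\alpha\notin\Delta_n^+$, lifts $s$ to $k_0\in K$, and produces an explicit non-zero radical element of $B_{k_0\cdot f}|_{\mathfrak{a}\oplus\mathfrak{n}}$ from the sign conflict between $\langle f,i\alpha\rangle$ and $\langle f,is\alpha\rangle$. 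For ``$f$ holomorphic $\Rightarrow$ always non-degenerate'', the uniform positivity of $\langle f,i\alpha\rangle$ on the $W_K$-stable set $\Delta_n^+$, together with the Harish-Chandra/Cayley-transform description relating $\mathfrak{a}$ to a maximal strongly orthogonal subsystem of $\Delta_n^+$, lifts the non-degeneracy to every Iwasawa subalgebra, anchored by the $K$-fixed baseline $f=f_{Z_0}$ (for which $\text{p}(K\cdot f)$ collapses to a single point inside the Rossi-Vergne open orbit $\Omega$).

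The hard part is this last step — matching the complex-root characterization of holomorphy (with respect to $\mathfrak{t}$) with the restricted-root non-degeneracy of $B_f$ on Iwasawa subalgebras (with respect to $\mathfrak{a}$). The structure theory of Hermitian symmetric spaces and Carmona's explicit description of the open $AN$-orbits in Section 4 are what bridge the two pictures, and in the necessity direction one must explicitly exhibit a radical vector from the $W_K$-stability failure, which is the technical crux of Sections 4--5.
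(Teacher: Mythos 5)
Your reduction is correct and, as far as it goes, clean: the submersion argument (that $\mathfrak{g}(x)\cap(\mathfrak{a}\oplus\mathfrak{n})=0$ for strongly elliptic $x$, by comparing purely imaginary semisimple versus real triangular $\operatorname{ad}$-spectra, so $\text{p}(\mathcal{O}_f)$ is always open), the identification of open $AN$-orbits with the connected components of the regular stratum, and the resulting equivalence ``$\text{p}(\mathcal{O}_f)$ is a single open orbit $\iff$ $B_{k\cdot f}|_{\mathfrak{a}\oplus\mathfrak{n}}$ is non-degenerate for every $k\in K$'' are all valid, and this framing is arguably more transparent than the paper's. But the proof stops exactly where the theorem begins: both implications of the algebraic equivalence are asserted rather than proved, and they are the entire content of Sections 4--5 of the paper. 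In the direction ``not holomorphic $\Rightarrow$ degenerate somewhere'' you say one ``produces an explicit non-zero radical element from the sign conflict'' between $\langle f,i\alpha\rangle$ and $\langle f,is\alpha\rangle$; no such element is exhibited, and it is not clear how to manufacture a vector of $\mathfrak{a}\oplus\mathfrak{n}$ (restricted-root data relative to $\mathfrak{a}$) out of a $W_K$-instability of $\Delta_n^+$ (data relative to the compact Cartan $\mathfrak{t}$). The paper bridges these two pictures by a genuinely different route: the highest restricted root vector $X_1$ satisfies $b\cdot X_1\in\mathbb{R}^*_+X_1$ for all $b\in AN$, so $h(X_1)$ has constant sign on any open orbit (Lemma 4.1); the Cayley transform makes $X_1+\theta(X_1)\in\mathfrak{t}$ proportional to a \emph{long} non-compact root $\alpha_1$; and Kostant's theorem that $\Delta_n^+\subset\operatorname{conv}(W_K\cdot\alpha_1)$ then forces holomorphy (Lemma 5.1). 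None of these ingredients appears in your sketch.

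The converse direction is where the real work lies and where your outline is not an argument. ``Uniform positivity on the $W_K$-stable set $\Delta_n^+$ \dots lifts the non-degeneracy to every Iwasawa subalgebra, anchored by the $K$-fixed baseline $f=f_{Z_0}$'' does not explain why non-degeneracy at the single $K$-fixed ray through $Z_0$ should propagate to the whole open cone of holomorphic elements: the locus of $f$ for which $B_{k\cdot f}|_{\mathfrak{a}\oplus\mathfrak{n}}$ is non-degenerate for all $k$ is not obviously closed in that cone, so no free deformation or connectedness argument is available. The paper's proof of this implication requires machinery absent from your proposal: a reduction (Proposition 5.1) from openness of an $AN$-orbit in $(\mathfrak{a}\oplus\mathfrak{n})^*$ to openness of its restriction to the abelian ideal $\mathfrak{n}_3=\bigoplus\mathfrak{g}_{\frac{1}{2}(\beta_i+\beta_j)}$; the computation that the projection of $\operatorname{Ad}^*(AN)\cdot f$ to $\mathfrak{n}_3$ is the symmetric cone $\Omega^+$ of the Euclidean Jordan algebra $\mathfrak{n}_3$; the Paneitz--Vinberg characterization of the maximal invariant cone $C_{\max}$; and the self-duality of $\overline{\Omega^+}$ to conclude $\operatorname{pr}_{\mathfrak{n}_3}(C_{\max})=\Omega^+$. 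Until both implications are carried out at this level of detail, what you have is a correct reformulation of the theorem, not a proof of it.
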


 \noindent \textbf{Remark}. (1) If there exists no open $AN$-coadjoint orbit in $(\mathfrak{a}\oplus\mathfrak{n})^*$, then it is clear that there is no holomorphic element in $\mathfrak{g}^*$. Thus in this case Theorem 3.1 is true. Hence in order to prove Theorem 3.1, we can always assume the existence of an open $AN$-coadjoint orbit in $(\mathfrak{a}\oplus\mathfrak{n})^*$.

 (2) As we mentioned previously, for all semisimple Lie groups $G=KAN$ of Hermitian type, there exists an open $AN$-coadjoint orbit in $(\mathfrak{a}\oplus\mathfrak{n})^*$. However, there are also other semisimple Lie groups $G$ of non-Hermitian type, for which there exists an open $AN$-coadjoint orbit in $(\mathfrak{a}\oplus\mathfrak{n})^*$: such as the connected non compact Lie group $G$ whose Lie algebra $\mathfrak{g}$ is the split real form of the simple complex Lie algebra of type $\textrm{G}_2$.

(3) It is clear that the theorem (and the proof of the theorem) is independent of any choice of the Cartan decomposition and the subgroup $AN$.  In the extreme situation where $AN$ is reduced to a point (in other words $G$ is semisimple compact), it is clear that the theorem is correct. Thus in the following sections, we suppose that $AN$ is not trivial (i.e. $G$ is not compact).

\section{Characterization of open $AN$-coadjoint orbits in $(\mathfrak{a}\oplus\mathfrak{n})^*$}

In this section, we will give some results on open $AN$-orbits, which are essential for our proof of theorem 3.1. All these results can be found in (\cite{Ca}).

Let $G=KAN$ be an Iwasawa decomposition for a semisimple Lie group $G$. Denote $\mathfrak{a}:=\text{Lie}(A)$ and $\mathfrak{n}:=\text{Lie}(N)$. Notice that a priori, we do not assume there exists an open $AN$-coadjoint orbit in $(\mathfrak{a}\oplus\mathfrak{n})^*$.  Let $\mathfrak{h}=\mathfrak{h}_{k}\oplus \mathfrak{a}$ be a $\theta$-stable Cartan sub-algebra containing $\mathfrak{a}$. Denote by $\Phi_{\mathfrak{a}}$ (resp. $\Phi_{\mathfrak{h}}$) the system of restricted roots (resp. roots) with respect to $(\mathfrak{g}, \mathfrak{a})$ (resp. $(\mathfrak{g}_{\mathbb{C}}, \mathfrak{h}_{\mathbb{C}})$).

We can choose a set of positive roots $\Phi^{+}_{\mathfrak{a}}$ (resp. $\Phi^{+}_{\mathfrak{h}}$) for $\Phi_{\mathfrak{a}}$ (resp. $\Phi_{\mathfrak{h}}$) such that the elements in $\Phi^{+}_{\mathfrak{a}}$ are the restrictions to $\mathfrak{a}$ of the elements of $\Phi^{+}_{\mathfrak{h}}$ which are non zero over $\mathfrak{a}$. Then starting with the highest root $\beta_1$ of $\Phi^{+}_{\mathfrak{a}}$, we can construct a particular maximal strongly orthogonal set $\Upsilon=\{\beta_j\}_{1\leq j\leq r}\subset\Phi^{+}_{\mathfrak{a}}$.

Then there exists an open $AN$-coadjoint orbit in $(\mathfrak{a}\oplus\mathfrak{n})^*$ if and only if $r=\dim(\mathfrak{a})$ and $\dim(\mathfrak{g}_{\beta_j})=1$ for all $1\leq j\leq r$, where $\mathfrak{g}_{\beta_j}$ is the restricted root space of $\beta_j$. It is clear that in this case, each $\beta_j$ is the restriction of a unique real root (i.e vanishes on $\mathfrak{h}_{k}$) in $\Phi^{+}_{\mathfrak{h}}$.

Now assume that there exists an open $AN$-coadjoint orbit in $(\mathfrak{a}\oplus\mathfrak{n})^*$. Fix a non-zero element $X_j\in \mathfrak{g}_{\beta_j}$ for each $1\leq j\leq r$. Then we can find in each open $AN$-coadjoint orbit a unique element $s$, such that (1) $s\vert_{\mathfrak{a}}=0$, (2) $s\vert_{\mathfrak{g}_{\gamma}}=0$ for all $\gamma\in \Phi^{+}_{\mathfrak{a}}\setminus \Upsilon$ and (3) $s(X_j)\in \{\pm 1\}$ (so there are $2^{r}$ open $AN$-orbits in $(\mathfrak{a}\oplus\mathfrak{n})^*$).  As $\beta_1$ is the highest root, it follows that $\beta_1$ is a long restricted root.  We end this section with a lemma which is useful later on. \\

\begin{lemma}
Let $\Omega$ be an open $AN$-orbit in $(\mathfrak{a}\oplus\mathfrak{n})^*$, then $\{h(X_1): h\in \Omega\}$ is contained in $\mathbb{R}^{*}_+$ or $\mathbb{R}^{*}_-$.

\end{lemma}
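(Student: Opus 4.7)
The plan is to use the canonical representative $s \in \Omega$ described immediately before the lemma (with $s|_{\mathfrak{a}}=0$, $s|_{\mathfrak{g}_{\gamma}}=0$ for $\gamma \in \Phi^+_{\mathfrak{a}} \setminus \Upsilon$, and $s(X_j) \in \{\pm 1\}$), write an arbitrary $h \in \Omega$ as $h = g \cdot s$ with $g \in AN$, and evaluate $h(X_1) = s(\operatorname{Ad}(g^{-1})X_1)$. The entire content of the lemma reduces to computing how $\operatorname{Ad}(g^{-1})$ acts on $X_1$ for $g \in AN$.

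First I would observe that, because $\beta_1$ is the \emph{highest} restricted root in $\Phi^+_{\mathfrak{a}}$, for any $\gamma \in \Phi^+_{\mathfrak{a}}$ the sum $\beta_1 + \gamma$ is not a restricted root, whence $[\mathfrak{g}_{\gamma}, \mathfrak{g}_{\beta_1}] \subset \mathfrak{g}_{\beta_1+\gamma} = 0$. Therefore $[\mathfrak{n}, X_1] = 0$, and consequently $\operatorname{Ad}(n) X_1 = X_1$ for every $n \in N$. Next, for $a = \exp(H) \in A$ with $H \in \mathfrak{a}$, one has $\operatorname{Ad}(a)X_1 = e^{\beta_1(H)} X_1$, a strictly positive multiple of $X_1$. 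Writing $g = an$ in the Iwasawa decomposition of $AN$, this gives $\operatorname{Ad}(g^{-1})X_1 = c(g)\, X_1$ with $c(g) = e^{-\beta_1(H)} > 0$.

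Plugging back in, $h(X_1) = s(c(g) X_1) = c(g)\, s(X_1)$. Since $s(X_1) \in \{\pm 1\}$ is a fixed sign determined by $\Omega$ (by the uniqueness of the canonical representative), and $c(g)$ ranges over $\mathbb{R}^*_+$ as $g$ varies over $AN$, we conclude that $\{h(X_1) : h \in \Omega\}$ is either $\mathbb{R}^*_+$ or $\mathbb{R}^*_-$ depending on whether $s(X_1) = +1$ or $-1$. This proves the lemma.

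I do not expect any serious obstacle here: the only non-trivial input is the fact that $\beta_1 + \gamma \notin \Phi_{\mathfrak{a}}$ for any $\gamma \in \Phi^+_{\mathfrak{a}}$, which is a direct consequence of $\beta_1$ being the highest restricted root. Alternatively, one could avoid invoking the canonical representative $s$ at all and simply note that $\Omega$ is connected (being the continuous image of the connected group $AN$), so the image $\{h(X_1) : h \in \Omega\} \subset \mathbb{R}$ is connected; it then suffices to rule out the value $0$, which again follows from $\operatorname{Ad}(g^{-1})X_1 \in \mathbb{R}^*_+ \cdot X_1$ combined with $s(X_1) \neq 0$.
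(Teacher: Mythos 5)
Your proof is correct and follows essentially the same route as the paper: write $h = b\cdot s$ with $s$ the canonical representative and use that $X_1$, being a highest restricted root vector, satisfies $\operatorname{Ad}(b)X_1 \in \mathbb{R}^*_+\, X_1$ for all $b \in AN$. You merely spell out the detail the paper leaves implicit (namely $[\mathfrak{n},X_1]=0$ and the positive scaling by $A$), which is a welcome addition but not a different argument.
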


\begin{proof}

It is sufficient to notice that each $h\in\Omega$ is of the form $b.s$, where $b\in AN$ and $s$ is the unique element in $\Omega$ described previously. As $X_1$ is a highest root vector, it follows that for all $b\in AN$, $b.X_1\in \mathbb{R}^{*}_+.X_1$.
\end{proof}

\noindent \textbf{Remark}. In general, this lemma is false for $X_j$ with $j\neq 1$.

\section{Proof of theorem 3.1}

   From now on we assume the existence of an open $AN$-coadjoint orbit in $(\mathfrak{a}\oplus\mathfrak{n})^*$ (according to the remark (1) of section 3, otherwise,
   the theorem is automatically true). According to the previous section, $\beta_j$ is the restriction of a unique real root in $\Phi^{+}_{\mathfrak{h}}$ and $\beta_j$ is strongly orthogonal to $\beta_i$ for $1\leq i\neq j \leq r$, with $r=\dim(\mathfrak{a})$. Thus the process of Cayley transforms applied to $\mathfrak{h}$ allows us to see that $\mathfrak{t}=\mathfrak{h}_{k}\oplus \bigoplus^{r}_{j=1}\mathbb{R}(X_j+\theta(X_j))$ is a $\theta$-stable compact Cartan sub-algebra (notice that $\theta(X_j)\in \mathfrak{g}_{-\beta_j}$). Moreover under the identification $\mathfrak{t}\cong\mathfrak{t}^*$ of section 2, $Y_j:=X_j+\theta(X_j)$ is proportional to a non-compact root $\alpha_j$ with respect to the roots system $\Delta:=\Delta(\mathfrak{g}_{\mathbb{C}}, \mathfrak{t}_{\mathbb{C}})$. Especially, $\alpha_1$ is a long root, since $\beta_1$ is a long restricted root and the Cayley transforms preserve the length of roots. From now on, we will work on the compact Cartan sub-algebra $\mathfrak{t}$ constructed above.\\

Next we begin to prove the Theorem 3.1. It is clear that it is sufficient to prove it for $G$ simple connected:

\begin{proof}

We first prove the "$\Longleftarrow$" part.\\

Suppose that $\text{p}(G.f)$ is an open $AN$-orbit. Note that $\text{p}(G.f)=AN.\text{p}(K.f)$. Next it is clear that for each $k\in K$, we have $\text{p}(k.f)(X_1)=(k.f)(X_1)$. Hence we conclude that $\{(k.f)(X_1): k\in K\}$ is contained in $\mathbb{R}^{*}_+$ or $\mathbb{R}^{*}_-$. Further as $K.f\in \mathfrak{k}^*$ and $X_1=\frac{Y_1}{2}+\frac{X_1-\theta(X_1)}{2}$, we have $(k.f)(X_1)=(k.f)(\frac{Y_1}{2})$. This implies especially $\{\langle f, w.Y_1\rangle: w\in W_K\}$ is contained in $\mathbb{R}^{*}_+$ or $\mathbb{R}^{*}_-$. On the other hand, we have seen that $Y_1$ is proportional to a long non-compact root $\alpha_1$. Thus the "$\Longleftarrow$" part is a direct consequence of the lemma below.

\end{proof}

\begin{lemma}
 Let $G=K \exp{\mathfrak{p}}$ be a Cartan decomposition (with respect to the Cartan involution $\theta$) of a connected simple Lie group $G$ with Lie algebra $\mathfrak{g}$ . Suppose that $\mathfrak{t}$ is a $\theta$-stable compact Cartan sub-algebra. Let $f\in \mathfrak{t}^*\subset\mathfrak{g}^*$ be a strongly elliptic element such that $\mathfrak{t}\subset\mathfrak{g}(f)$. Suppose that there exists a long non-compact root $\beta$ in $\Delta=\Delta(\mathfrak{g}_{\mathbb{C}}, \mathfrak{t}_{\mathbb{C}})$, such that $\{\langle f, iw.\beta\rangle: w\in W_K\}$ is contained in $\mathbb{R}^{*}_+$ or $\mathbb{R}^{*}_-$. Then $f$ is holomorphic.
\end{lemma}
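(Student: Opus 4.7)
The plan is to show that $\Delta_n^+(f) := \{\alpha \in \Delta_n : \langle f, i\alpha\rangle > 0\}$ is $W_K$-stable, which by definition is equivalent to $f$ being holomorphic. Without loss of generality assume the sign in the hypothesis is positive, so $W_K.\beta \subset \Delta_n^+(f)$. The first step is to deduce that $\mathfrak{g}$ is of Hermitian type. I consider $\xi := \sum_{w \in W_K} w.\beta \in \mathfrak{t}^*$, which is manifestly $W_K$-invariant; since $\langle f, i\xi\rangle > 0$, the element $\xi$ is nonzero. Under the identification $\mathfrak{t}^* \cong \mathfrak{t}$ via the Killing form, $\xi$ corresponds to a nonzero element of $\mathfrak{t}^{W_K} = \mathfrak{z}(\mathfrak{k})$, so the center of $\mathfrak{k}$ is nontrivial; since $\mathfrak{g}$ is simple and non-compact this forces $\mathfrak{g}$ to be of Hermitian type. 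I then invoke the Harish-Chandra decomposition $\mathfrak{p}_{\mathbb{C}} = \mathfrak{p}^+ \oplus \mathfrak{p}^-$ into the two $K$-irreducible summands; both $\Delta(\mathfrak{p}^+)$ and $\Delta(\mathfrak{p}^-)$ are $W_K$-stable with common cardinality $|\Delta_n|/2 = |\Delta_n^+(f)|$.

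After arranging $\beta \in \Delta(\mathfrak{p}^+)$ (up to swapping $\mathfrak{p}^+$ and $\mathfrak{p}^-$), the remaining task is the inclusion $\Delta(\mathfrak{p}^+) \subset \Delta_n^+(f)$: the cardinality match will then force equality, and the $W_K$-stability of $\Delta(\mathfrak{p}^+)$ gives the conclusion. My strategy is to show that every $\gamma \in \Delta(\mathfrak{p}^+)$ admits a decomposition $2\gamma = w_1.\beta + w_2.\beta$ with $w_1, w_2 \in W_K$; pairing with $if$ then gives $2\langle f, i\gamma\rangle > 0$ by the sign hypothesis. In the simply-laced Hermitian types $\mathfrak{su}(p,q)$, $\mathfrak{so}^*(2n)$, $\mathfrak{e}_{6(-14)}$ and $\mathfrak{e}_{7(-25)}$, all roots have the same length and $\mathfrak{p}^+$ is a minuscule $K$-module, so $\gamma \in W_K.\beta$ and the identity is trivial.

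The two non-simply-laced Hermitian families yield to direct computation on the standard root data. In $\mathfrak{sp}(n,\mathbb{R})$ of type $C_n$, the long weights of $\mathfrak{p}^+$ are $\{2e_i\}_{1\leq i\leq n}$ and $W_K = S_n$ acts transitively on them, so $W_K.\beta = \{2e_k\}$; for a short weight $\gamma = e_i + e_j$ one writes $2\gamma = 2e_i + 2e_j$. In $\mathfrak{so}(2,2n-1)$ of type $B_n$, the only short weight of $\mathfrak{p}^+$ is $e_1$, and $2e_1 = (e_1+e_j) + (e_1-e_j)$ with both summands in $W_K.\beta$ since $W_K = W(B_{n-1})$ acts by signed permutations on $e_2,\ldots,e_n$. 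The main obstacle is precisely this identity $2\gamma \in W_K.\beta + W_K.\beta$, where the longness of $\beta$ is used essentially: a uniform conceptual argument bypassing type-by-type verification would be more elegant, but since only two non-simply-laced Hermitian families require genuine checking, the explicit root-system analysis is transparent and completes the proof.
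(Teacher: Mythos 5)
Your argument is correct, and its first two steps coincide with the paper's: you both use the nonvanishing of $\langle f, i\sum_{w\in W_K}w.\beta\rangle$ to produce a nonzero $W_K$-invariant element of $\mathfrak{t}$, hence a nontrivial center of $\mathfrak{k}$, forcing $\mathfrak{g}$ to be of Hermitian type, and you both then reduce to showing $\Delta(\mathfrak{p}^+_{\mathbb{C}})\subset\Delta_n^+(f)$ after placing $\beta$ in $\Delta(\mathfrak{p}^+_{\mathbb{C}})$. Where you diverge is the final inclusion. The paper invokes Kostant's theorem: since $\beta$ is long it is an extreme weight of the irreducible $K$-module $\mathfrak{p}^+_{\mathbb{C}}$, so every weight of $\mathfrak{p}^+_{\mathbb{C}}$ lies in $\operatorname{conv}(W_K.\beta)$, and positivity of $\langle f, i\cdot\rangle$ is preserved under convex combinations. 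You instead prove the sharper combinatorial fact $2\gamma\in W_K.\beta+W_K.\beta$ for every $\gamma\in\Delta(\mathfrak{p}^+_{\mathbb{C}})$ by running through the classification of Hermitian simple Lie algebras. Your route is more elementary (it needs no convexity theorem) and the two non-simply-laced checks are transparent, but it trades uniformity for a case analysis; the paper's argument is classification-free and works verbatim for any extreme weight. One small point to tidy up: your explicit list of simply-laced Hermitian types omits $\mathfrak{so}(2,2n-2)$ (type $D_n$, with $\mathfrak{p}^+_{\mathbb{C}}$ the minuscule standard module of $\mathfrak{so}(2n-2)$); it is covered by exactly the same minuscule argument, but it should appear in the enumeration for the case analysis to be complete.
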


\begin{proof}
  Firstly, if the condition in the lemma is satisfied, then $\langle f, i\sum_{w\in W_K}w.\beta\rangle\neq 0$. Hence $i\sum_{w\in W_K}w.\beta\neq0$. But $i\sum_{w\in W_K}w.\beta$ is invariant under $W_K$. Thus it is in the center of $\mathfrak{k}$. This implies that the center of  $\mathfrak{k}$ is non trivial. Hence $\mathfrak{g}$ must be of Hermitian type. It follows that the $\text{Ad}$-representation of $K$ in $\mathfrak{p}_{\mathbb{C}}$ decomposes into two irreducible components. Moreover in this case, the 2 irreducible components $\mathfrak{p}_{\mathbb{C}}^+$, $\mathfrak{p}_{\mathbb{C}}^{-}$ are abelian. Then without loss of generality, we can assume our $\beta\in\Delta^{+}_{n}$, where $\Sigma_{\alpha\in \Delta_{n}^+} \mathfrak{g}_\alpha=\mathfrak{p}_{\mathbb{C}}^+$. However $\beta$ is a long root, thus an extreme weight for the  $\text{Ad}$-representation of $K$. Hence according to a Kostant's theorem, $\Delta_{n}^+$ is contained in the convex hull of $W_K.\beta$, $\text{conv}(W_K.\beta)$. Then we deduce that $\{\langle f, i\alpha\rangle: \alpha\in \Delta_{n}^+\}$ is contained in $\mathbb{R}^{*}_+$ or $\mathbb{R}^{*}_-$. Hence the lemma is proved.

\end{proof}

Now we want to prove the "$\Longrightarrow$" part of the Theorem 3.1. For this, we only need to treat the simple Lie groups of Hermitian type. But firstly we want to prove a general proposition for solvable Lie groups, then apply this proposition to our situation.

\subsection{Open coadjoint orbits for solvable Lie groups}

\begin{proposition}
Let $S$ be a connected solvable Lie group with Lie algebra $\mathfrak{s}$. Suppose that

\noindent (i) $\mathfrak{s}=\mathfrak{s}_1\oplus\mathfrak{s}_2$, where  $\mathfrak{s}_1$ is a Lie subalgebra and $\mathfrak{s}_2$ is an ideal of $\mathfrak{s}$.

\noindent (ii) $\mathfrak{s}_3\subset\mathfrak{s}_2$ is an abelian ideal of $\mathfrak{s}$, which verifies $[\mathfrak{s}_2, \mathfrak{s}_2]\subset \mathfrak{s}_3$ and $[\mathfrak{s}_2, \mathfrak{s}_3]=\{0\}$.

\noindent (iii) $\dim(\mathfrak{s}_3)=\dim(\mathfrak{s}_1)$.

\noindent (iv) There exists an open $S$-coadjoint orbit in $\mathfrak{s}^*$.

Let $\lambda\in \mathfrak{s}^*$ and $\lambda_3:=\lambda\vert_{\mathfrak{s}_3}$. Then
the coadjoint orbit $S.\lambda$ is open in $\mathfrak{s}^*$ if and only if $S.\lambda_3$ is an open orbit in $\mathfrak{s}_3^*$.
\end{proposition}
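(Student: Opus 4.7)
The plan is to translate both openness conditions into non-degeneracy statements for skew-symmetric bilinear forms. The orbit $S.\lambda$ is open in $\mathfrak{s}^*$ if and only if the Kirillov--Kostant form $B_\lambda(X,Y):=\lambda([X,Y])$ is non-degenerate on $\mathfrak{s}$, since its radical is precisely the stabilizer Lie algebra $\mathfrak{s}(\lambda)$. Because $[\mathfrak{s}_2,\mathfrak{s}_3]=0$, the coadjoint $S$-action on $\mathfrak{s}_3^*$ factors through $S/S_2$, whose Lie algebra is $\mathfrak{s}_1$; combined with (iii), $S.\lambda_3$ is open in $\mathfrak{s}_3^*$ if and only if the map $\phi_\lambda\colon\mathfrak{s}_1\to\mathfrak{s}_3^*$, $X\mapsto-\lambda([X,\cdot])|_{\mathfrak{s}_3}$, is a linear isomorphism.

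I would then choose a vector-space complement $\mathfrak{s}_2'$ of $\mathfrak{s}_3$ in $\mathfrak{s}_2$ and write $B_\lambda$ in block form with respect to $\mathfrak{s}=\mathfrak{s}_1\oplus\mathfrak{s}_3\oplus\mathfrak{s}_2'$. The assumptions in (ii) force the $(\mathfrak{s}_3,\mathfrak{s}_3)$ and $(\mathfrak{s}_3,\mathfrak{s}_2')$ blocks to vanish, leaving
\[
B_\lambda=\begin{pmatrix}A & B & C\\ -B^{\top} & 0 & 0\\ -C^{\top} & 0 & E\end{pmatrix},
\]
where $B$ is the (square, by (iii)) matrix of $\phi_\lambda$ and $E$ is the matrix of the well-defined induced skew form $\bar E_{\lambda_3}(\bar X,\bar Y):=\lambda_3([X,Y])$ on the abelian quotient $\mathfrak{s}_2/\mathfrak{s}_3$. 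Crucially, both $B$ and $E$ depend only on $\lambda_3$. A Schur-complement calculation then yields the clean identity $\det B_\lambda=(\det B)^2\,\det E$, so $B_\lambda$ is non-degenerate if and only if $B$ and $E$ are both non-degenerate.

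The forward implication $(\Rightarrow)$ is now immediate: $B_\lambda$ non-degenerate forces $B$ non-degenerate, whence $\phi_\lambda$ is an isomorphism and $S.\lambda_3$ is open. (One can also obtain this directly from the $S$-equivariance of $\text{p}$.) The converse therefore reduces to the containment $V(\det E)\subseteq V(\det B)$ of vanishing loci in $\mathfrak{s}_3^*$, equivalently $\det B(\lambda_3)\neq 0\Rightarrow\det E(\lambda_3)\neq 0$.

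This final containment is where assumption (iv) is essential and constitutes the hardest step. From (iv) one extracts some $\mu$ with $\det B(\mu_3)\det E(\mu_3)\neq 0$, so neither polynomial vanishes identically on $\mathfrak{s}_3^*$, and both are $S$-semi-invariants (the Ad-action of $S$ on $\mathfrak{s}_3$ and on $\mathfrak{s}_2/\mathfrak{s}_3$ conjugates $B$ and $E$, scaling their determinants by characters). To establish $V(\det E)\subseteq V(\det B)$ I would argue as follows: given $\lambda_3$ with $\bar E_{\lambda_3}$ degenerate, choose $\bar X\in\ker\bar E_{\lambda_3}\setminus\{0\}$ and lift to $X\in\mathfrak{s}_2\setminus\mathfrak{s}_3$, then apply the Jacobi identity to triples $(X,X',Y_1)$ with $X'\in\mathfrak{s}_2$ and $Y_1\in\mathfrak{s}_1$, using $[\mathfrak{s}_2,\mathfrak{s}_2]\subset\mathfrak{s}_3$ and $[\mathfrak{s}_2,\mathfrak{s}_3]=0$ to propagate the degeneracy along the $\mathfrak{s}_1$-orbit of $\bar X$, and conclude with the existence of some $Y_1\neq 0\in\mathfrak{s}_1$ annihilated by $\phi_\lambda$, i.e.\ $\det B(\lambda_3)=0$. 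Without (iv) the implication $\phi_\lambda$ iso $\Rightarrow\bar E_{\lambda_3}$ non-degenerate genuinely fails, so the crux is to keep the Jacobi argument from being vacuous by exploiting the non-vanishing provided by (iv).
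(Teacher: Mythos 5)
Your reduction is sound and follows a genuinely different route from the paper's: the block decomposition of $B_\lambda$ relative to $\mathfrak{s}=\mathfrak{s}_1\oplus\mathfrak{s}_3\oplus\mathfrak{s}_2'$, the identity $\det B_\lambda=(\det B)^2\det E$ (which checks out, e.g.\ by swapping the first two block columns), and the observation that $B$ and $E$ depend only on $\lambda_3$ are all correct, and they correctly reduce the proposition to the implication $\det B(\lambda_3)\neq 0\Rightarrow\det E(\lambda_3)\neq 0$. The paper arrives at the same crux in different language: there, $\det E(\lambda_3)\neq 0$ is expressed as ``$\lambda_3$ extends to a regular element $\lambda_2$ of $\mathfrak{s}_2^*$, i.e.\ $\mathfrak{s}_2(\lambda_2)=\mathfrak{s}_3$.''

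The gap is in your last step. The Jacobi-identity ``propagation'' is a pointwise argument: it would produce a nonzero element of $\ker\phi_\lambda$ from a nonzero element of $\ker\bar E_{\lambda_3}$ using only the bracket relations at the single covector $\lambda_3$. But, as you yourself note, the implication fails without (iv), and (iv) is a global hypothesis (existence of an open orbit somewhere in $\mathfrak{s}^*$) that does not change any bracket relation at $\lambda_3$; so no pointwise identity can suffice, and you never say how (iv) actually enters the computation. Concretely, for $X$ with $\bar X\in\ker\bar E_{\lambda_3}$, Jacobi gives $\lambda([[Y_1,X],X'])=\lambda([Y_1,[X,X']])$ for $Y_1\in\mathfrak{s}_1$, $X'\in\mathfrak{s}_2$: the obstruction to $\ker\bar E_{\lambda_3}$ being $\mathrm{ad}(\mathfrak{s}_1)$-stable is precisely a value of $\phi_\lambda(Y_1)$, which you cannot force to vanish, and no nonzero $Y_1\in\ker\phi_\lambda$ emerges. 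The fix is short and uses only ingredients you already wrote down: if $\det B(\lambda_3)\neq 0$ then $S.\lambda_3$ is open in $\mathfrak{s}_3^*$; by your semi-invariance remark $\det E(s.\mu_3)=\det\bigl(\mathrm{Ad}(s)^{-1}\vert_{\mathfrak{s}_2/\mathfrak{s}_3}\bigr)^2\det E(\mu_3)$, so the zero locus of the polynomial $\det E$ is $S$-invariant; hence if $\det E(\lambda_3)=0$ then $\det E$ vanishes on the nonempty open set $S.\lambda_3$ and therefore identically, contradicting the non-vanishing extracted from (iv). This is in substance what the paper does, phrased there as: the set of $\lambda_3$ extending to regular elements of $\mathfrak{s}_2^*$ is open, dense and $S$-invariant, hence contains every open $S$-orbit of $\mathfrak{s}_3^*$.
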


\begin{proof}
If $S.\lambda$ is open, it is obvious that $S.\lambda_3$ is open. Next we will prove "$\Longleftarrow$".

 Define ${\mathfrak{s}_3^*}':=\{\lambda\in \mathfrak{s}_3^*: S.\lambda \ \text{is open in} \ \mathfrak{s}_3^* \}$ and $\widetilde{\mathfrak{s}_3^*}:= \{\lambda_3\in \mathfrak{s}_3^*:\text{there exists a regular element }$ $ \lambda\in \mathfrak{s}_2^* \ \text{ such that} \ \lambda\vert_{\mathfrak{s}_3}=\lambda_3 \}$ (recall that an element $\lambda\in \mathfrak{s}_2^*$ is called regular, if the Lie algebra of its stabilizer $\mathfrak{s}_2(\lambda)$ is of minimal dimension). Then $\widetilde{\mathfrak{s}_3^*}$ is open and dense in $\mathfrak{s}_3^*$. On the other hand, $[\mathfrak{s}_2, \mathfrak{s}_2]\subset \mathfrak{s}_3$ and $\mathfrak{s}_2$ is an ideal. Thus we deduce that $\widetilde{\mathfrak{s}_3^*}$ is $S$-invariant and $\lambda_2\in\mathfrak{s}_2^*$ is regular if and only of $\lambda_2\vert_{\mathfrak{s}_3}\in\widetilde{\mathfrak{s}_3^*}$. Then the $S$-invariance and density of $\widetilde{\mathfrak{s}_3^*}$ imply that each open $S$-orbit of $\mathfrak{s}_3^*$ is contained in $\widetilde{\mathfrak{s}_3^*}$. In other words, we have ${\mathfrak{s}_3^*}'\subset\widetilde{\mathfrak{s}_3^*}$.

Since $[\mathfrak{s}_2, \mathfrak{s}_3]=0$, it is clear that for all $\lambda_2\in\mathfrak{s}_2^*$, we have $\mathfrak{s}_3\subset\mathfrak{s}_2(\lambda_2)$. Next we want to prove that for $\lambda_2$ regular in $\mathfrak{s}_2^*$, we have $\mathfrak{s}_3=\mathfrak{s}_2(\lambda_2)$. Actually, according to our assumption, we can take a $\widetilde{\lambda}\in \mathfrak{s}^*$ which lies in an open $S$-orbit. Denote $\widetilde{\lambda_2}:=\widetilde{\lambda}\vert_{\mathfrak{s}_2}$. Hence it is clear that $\mathfrak{s}_2(\widetilde{\lambda_2})=\mathfrak{s}_2\cap\mathfrak{s}_2^{\perp_{B_{\widetilde{\lambda}}}}$, where $\mathfrak{s}_2^{\perp_{B_{\widetilde{\lambda}}}}$ is the orthogonal of $\mathfrak{s}_2$ in $\mathfrak{s}$ with respect to the Kirillov-Kostant-Souriau symplectic form $B_{\widetilde{\lambda}}=\widetilde{\lambda}([,])$. However, we have $\dim(\mathfrak{s}_2^{\perp_{B_{\widetilde{\lambda}}}})=\dim(\mathfrak{s})-\dim(\mathfrak{s}_2)=\dim(\mathfrak{s}_3)$. Then we have $\mathfrak{s}_2(\widetilde{\lambda_2})=\mathfrak{s}_3$. Hence $\mathfrak{s}_3=\mathfrak{s}_2(\lambda_2)$, for all $\lambda_2$ regular in $\mathfrak{s}_2^*$.

Now assume $\lambda\in\mathfrak{s}^*$ such that $\lambda_3:=\lambda\vert_{\mathfrak{s}_3}\in {\mathfrak{s}_3^*}'$, i.e., $S.\lambda_3$ is open. Let $\lambda_2:=\lambda\vert_{\mathfrak{s}_2}$. Then according to what we have seen, $\lambda_2$ is regular. Now since $S.\lambda_3$ is open and $\dim(\mathfrak{s})-\dim(\mathfrak{s}_3)=\dim(\mathfrak{s}_2)$, we have $\dim(\mathfrak{s}(\lambda_3))=\dim(\mathfrak{s}_2)$. But it is clear that $\mathfrak{s}_2\subset\mathfrak{s}(\lambda_3)$. Thus $\mathfrak{s}_2=\mathfrak{s}(\lambda_3)$. Then we deduce that $\mathfrak{s}(\lambda)\subset\mathfrak{s}_2(\lambda_2)$. But we have proved $\mathfrak{s}_2(\lambda_2)=\mathfrak{s}_3$. Hence we deduce that $\mathfrak{s}(\lambda)$ equals the orthogonal of $\mathfrak{s}.\lambda_3 \subset\mathfrak{s}_3^*$. However, $\mathfrak{s}.\lambda_3=\mathfrak{s}_3^*$. Hence $\mathfrak{s}(\lambda)=0$. Then "$\Longleftarrow$" is proved.

\end{proof}

\noindent \textbf{Remark}. If $\mathfrak{s}_2=\mathfrak{s}_3$, then we can drop the assumption that there exists an $S$-open orbit in $\mathfrak{s}$. This can be easily seen from the proof.\\

 Let $G$ be simple of Hermitian type. Then the restricted roots system $\Phi_{\mathfrak{a}}$ is contained in $\{\pm\frac{1}{2}(\beta_i+\beta_j)\}_{1\leq i, j\leq r }\cup \{\pm\frac{1}{2}(\beta_i-\beta_j)\}_{1\leq i < j\leq r }\cup \{\pm\frac{1}{2}\beta_i\}_{1\leq i\leq r }$, where $r=\dim(\mathfrak{a})$. Notice that the terms "$\frac{1}{2}\beta_i$" might not appear in $\Phi_{\mathfrak{a}}$. We denote the ideals of $\mathfrak{a}\oplus\mathfrak{n}$, $\mathfrak{n}_3:=\bigoplus_{1\leq i, j\leq r}\mathfrak{g}_{\frac{1}{2}(\beta_i+\beta_j)}$ and $\mathfrak{n}_2:=\mathfrak{n}_3\oplus \bigoplus_{1\leq i\leq t}\mathfrak{g}_{\frac{1}{2}\beta_i}$. Then we have $\mathfrak{n}_3\subseteq \mathfrak{n}_2$ and $\mathfrak{a}\oplus\mathfrak{n}=\mathfrak{n}_2\oplus\mathfrak{n}_1$, where $\mathfrak{n}_1=\mathfrak{a}\oplus\bigoplus_{1\leq i < j\leq r }\mathfrak{g}_{\frac{1}{2}(\beta_i-\beta_j)}$. Hence the conditions of the previous proposition are satisfied: we replace "$\mathfrak{s}$" by $\mathfrak{a}\oplus\mathfrak{n}$ and "$\mathfrak{s}_i$" by $\mathfrak{n}_i$. Actually this can be easily seen for instance by the fact that there is a "$J$-algebra" structure in $\mathfrak{a}\oplus\mathfrak{n}$. Hence we have

\begin{corollary}
 Let $\lambda\in (\mathfrak{a}\oplus\mathfrak{n})^*$ and $\lambda_3:=\lambda\vert_{\mathfrak{n}_3}$. Then $AN.\lambda$ is an open $AN$-coadjoint orbit in $(\mathfrak{a}\oplus\mathfrak{n})^*$ if and only if $AN.\lambda_3$ is an open $AN$-orbit in $\mathfrak{n}_3^*$.
\end{corollary}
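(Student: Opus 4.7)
The plan is to apply Proposition~5.1 with $\mathfrak{s}=\mathfrak{a}\oplus\mathfrak{n}$, $\mathfrak{s}_1=\mathfrak{n}_1$, $\mathfrak{s}_2=\mathfrak{n}_2$, $\mathfrak{s}_3=\mathfrak{n}_3$. The existence of an open $AN$-coadjoint orbit in $(\mathfrak{a}\oplus\mathfrak{n})^*$ is the standing assumption of Section~5, so hypothesis~(iv) is automatic and the corollary reduces to verifying the three algebraic conditions~(i), (ii), (iii) for this splitting.

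For~(i) and~(ii), I would argue root-space by root-space using the list of possible shapes of elements of $\Phi_{\mathfrak{a}}$ given just before the corollary. Sums of two ``difference-type'' roots $\tfrac{1}{2}(\beta_i-\beta_j)$ either vanish or are again of difference-type (when the indices telescope), and $[\mathfrak{a},\mathfrak{g}_{\gamma}]\subset\mathfrak{g}_{\gamma}$, so $\mathfrak{n}_1$ is a subalgebra. Brackets of $\mathfrak{n}_1$ with a ``sum-type'' root space $\tfrac{1}{2}(\beta_i+\beta_j)$ or $\tfrac{1}{2}\beta_i$ either vanish or yield another sum-type root space, so $\mathfrak{n}_2$ is an ideal; brackets internal to $\mathfrak{n}_2$ raise the ``level'' and land in $\mathfrak{n}_3$, giving $[\mathfrak{n}_2,\mathfrak{n}_2]\subset\mathfrak{n}_3$. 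Finally, any sum $\gamma+\gamma'$ with $\gamma\in\{\tfrac{1}{2}(\beta_k+\beta_l),\tfrac{1}{2}\beta_k\}$ and $\gamma'\in\{\tfrac{1}{2}(\beta_m+\beta_n)\}$ strictly dominates the highest restricted root $\beta_1$ and so is never itself a root, forcing $[\mathfrak{n}_2,\mathfrak{n}_3]=0$ and in particular $\mathfrak{n}_3$ abelian. These are precisely the defining relations of the Piatetsky--Shapiro $J$-algebra structure on $\mathfrak{a}\oplus\mathfrak{n}$ alluded to just before the corollary.

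For~(iii), the existence of an open $AN$-orbit together with the Hermitian-type hypothesis pins the restricted root system down to type $C_r$ or $BC_r$ with $\dim\mathfrak{g}_{\beta_i}=1$; in both cases the short-root multiplicities satisfy $\dim\mathfrak{g}_{\frac{1}{2}(\beta_i+\beta_j)}=\dim\mathfrak{g}_{\frac{1}{2}(\beta_i-\beta_j)}=:a$ for $i<j$, so
\[
\dim\mathfrak{n}_3 \;=\; r + a\binom{r}{2} \;=\; \dim\mathfrak{a} + a\binom{r}{2} \;=\; \dim\mathfrak{n}_1.
\]
With (i)--(iv) in place, Proposition~5.1 applies verbatim and yields the equivalence.

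The only real obstacle is bookkeeping: enumerating the possible pairs of restricted roots among the five shapes appearing in $\Phi_{\mathfrak{a}}$ and checking which combinations can or cannot be roots. Strong orthogonality of the $\beta_j$'s and the highest-root property of $\beta_1$ do all the combinatorial work; there is no conceptual surprise beyond trusting the standard $J$-algebra description of $\mathfrak{a}\oplus\mathfrak{n}$ in the Hermitian case.
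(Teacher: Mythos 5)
Your proposal is correct and follows exactly the paper's route: the paper likewise obtains the corollary by applying Proposition~5.1 with $\mathfrak{s}=\mathfrak{a}\oplus\mathfrak{n}$ and $\mathfrak{s}_i=\mathfrak{n}_i$, justifying hypotheses (i)--(iii) by appeal to the $J$-algebra structure of $\mathfrak{a}\oplus\mathfrak{n}$ and hypothesis (iv) by the standing assumption. Your root-by-root verification and the multiplicity count $\dim\mathfrak{g}_{\frac{1}{2}(\beta_i+\beta_j)}=\dim\mathfrak{g}_{\frac{1}{2}(\beta_i-\beta_j)}$ simply make explicit what the paper leaves to the citation of that structure.
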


\subsection{strongly elliptic and holomorphic coadjoint orbits}

Recall that we want to show: if $f\in\mathfrak{g}^*$ is  strongly elliptic and holomorphic, then $\text{p}(\mathcal{O}_{f})$  is an open $AN$-coadjoint orbit.
Corollary 5.1 tells us that it is sufficient to show that $\text{p}_1(\mathcal{O}_{f})$ is an open $AN$-orbit in $\mathfrak{n}_3^*$, where $\text{p}_1: \mathfrak{g}^*\longrightarrow \mathfrak{n}_3^*$ is the natural projection.

Firstly, we translate it into the adjoint picture. Identify $\mathfrak{g}$ with $\mathfrak{g}^*$ via the inner product $\langle .,.\rangle$. Here $\langle X, Y\rangle=-\mathcal{K}(X,\theta(Y))$ for $X,Y\in \mathfrak{g}$. Then for $x\in G$ and $\mathfrak{g}^*\ni h=\langle., X_h\rangle$, we have $\text{Ad}^*(x).h=\langle.,\text{Ad}(\Theta(x)).X_h\rangle$. Thus we still have $\text{Ad}^*(G).h\cong \text{Ad}(G).X_h$. Denote $\text{pr}_{\mathfrak{n}_3}$ the orthogonal projection of $\mathfrak{g}$ onto $\mathfrak{n}_3$ with respect to $\langle .,.\rangle$.  Then we have the following lemma.

\begin{lemma}
The following diagram is commutative.

   \begin{tikzpicture}[scale=2.5]
   \node (A) at (0,1) {$\mathfrak{g}$};
   \node (B) at (1,1) {$\mathfrak{g}^*$};
   \node (C) at (0,0) {$\mathfrak{n}_3$};
   \node (D) at (1,0) {$\mathfrak{n}_3^*$};
   \path[->,font=\scriptsize,>=angle 90]
   (A) edge node[above]{$\cong$} (B)
    (A) edge node[below]{$\langle.,.\rangle$} (B)
   (A) edge node[left]{$\text{pr}_{\mathfrak{n}_3}$} (C)
   (B) edge node[right]{$\text{p}_1$} (D)
   (C) edge node[below]{$\langle.,.\rangle_{\mathfrak{n}_3\times \mathfrak{n}_3}$} (D)
   (C) edge node[above]{$\cong$} (D);
   \end{tikzpicture}

\end{lemma}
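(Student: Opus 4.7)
The statement is essentially a formal diagram chase relying on two facts: the bilinear form $\langle X, Y\rangle = -\mathcal{K}(X,\theta(Y))$ is a genuine inner product on $\mathfrak{g}$ (since $\theta$ is a Cartan involution of the semisimple algebra $\mathfrak{g}$), and the projection $\text{pr}_{\mathfrak{n}_3}$ is the orthogonal projection with respect to this very inner product. So I expect the entire argument to reduce to the defining property of an orthogonal projection.

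My plan is to trace an arbitrary element $X \in \mathfrak{g}$ around both paths of the diagram. Going right then down, one obtains the element $\text{p}_1(\langle X,\cdot\rangle) \in \mathfrak{n}_3^*$, which by the definition of $\text{p}_1$ is simply the restriction $\langle X,\cdot\rangle\bigr|_{\mathfrak{n}_3}$. Going down then right, one obtains the functional $\langle\text{pr}_{\mathfrak{n}_3}(X),\cdot\rangle\bigr|_{\mathfrak{n}_3}$. Therefore commutativity amounts to the identity
\[
  \langle X, Y\rangle \;=\; \langle \text{pr}_{\mathfrak{n}_3}(X), Y\rangle \qquad \text{for every } Y \in \mathfrak{n}_3.
\]

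To establish this identity, I would write $X = \text{pr}_{\mathfrak{n}_3}(X) + X'$ with $X'$ in the orthogonal complement $\mathfrak{n}_3^{\perp}$ of $\mathfrak{n}_3$ with respect to $\langle\cdot,\cdot\rangle$ (this decomposition makes sense because $\langle\cdot,\cdot\rangle$ is positive definite and $\mathfrak{n}_3$ is finite-dimensional). Then $\langle X',Y\rangle = 0$ for all $Y\in\mathfrak{n}_3$, and the identity follows by bilinearity.

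The only conceptual check to make at the start is that $\text{pr}_{\mathfrak{n}_3}$ is indeed well-defined as an orthogonal projection — that is, that $\langle\cdot,\cdot\rangle$ restricted to $\mathfrak{n}_3$ is non-degenerate. This is automatic from positive definiteness of $\langle\cdot,\cdot\rangle$ on the whole of $\mathfrak{g}$, which in turn is the standard consequence of $\theta$ being a Cartan involution of a semisimple Lie algebra. I do not anticipate any genuine obstacle; the lemma is a routine compatibility statement whose role is to let us transport the image of the moment map $\text{p}_1(\mathcal{O}_f)$ to the concrete subspace $\text{pr}_{\mathfrak{n}_3}(\text{Ad}(G)\cdot X_f) \subset \mathfrak{n}_3$, where the open-orbit criterion of Corollary 5.1 can be checked directly.
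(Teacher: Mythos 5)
Your proposal is correct and follows essentially the same route as the paper: both reduce commutativity to the identity $\langle X, Y\rangle = \langle \text{pr}_{\mathfrak{n}_3}(X), Y\rangle$ for $Y\in\mathfrak{n}_3$, which is the defining property of the orthogonal projection. Your extra remarks on positive definiteness of $\langle\cdot,\cdot\rangle$ are a harmless elaboration of what the paper leaves implicit.
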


\begin{proof}
Let $h\in \mathfrak{g}^*$ with $ h=\langle., X_h\rangle$. For any $Y\in\mathfrak{n}_3$, $h(Y)=\text{p}_1(h)(Y)=\langle Y, X_h\rangle=\langle Y, \text{pr}_{\mathfrak{n}_3}(X_h)\rangle$. This completes the proof.
\end{proof}

  Fix $f\in\mathfrak{t}^*$ a strongly elliptic and holomorphic element which corresponds to $X_f\in\mathfrak{t}$. Recall that $\mathfrak{t}$ always denotes the $\theta$-stable compact Cartan sub-algebra which is constructed at the beginning of the section. Let $X_i\in\mathfrak{g}_{\beta_i}$ such that $\langle f, X_i+\theta(X_i)\rangle>0$ (*). Let $\mathfrak{n}_c:=\bigoplus_{i<j}\mathfrak{g}_{\frac{1}{2}(\beta_i-\beta_j)}$ and $N_c:=\exp(\mathfrak{n}_c)$.

\begin{lemma}
 $\text{p}_1(\text{Ad}(AN)^*.f)$ corresponds to the subset $\text{Ad}(\theta(N_c))\sum^{r}_{j=1}\mathbb{R}^{*}_+X_j$ in $\mathfrak{n}_3$.

\end{lemma}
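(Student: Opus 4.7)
The plan is to translate the statement into the adjoint picture via Lemma~5.1 and then exploit a natural $(1/2)\mathbb{Z}$-grading on $\mathfrak{g}$. Under the identification $\mathfrak{g}^{*} \cong \mathfrak{g}$, the coadjoint orbit $\text{Ad}^{*}(AN).f$ corresponds to $\text{Ad}(A\bar{N}).X_f$, where $\bar{N} := \Theta(N)$ (note $\Theta(A) = A$ because $\theta|_{\mathfrak{a}} = -\text{id}$), and $\text{p}_1$ becomes the orthogonal projection $\text{pr}_{\mathfrak{n}_3}$. So the goal reduces to showing
\[
\text{pr}_{\mathfrak{n}_3}\bigl(\text{Ad}(A\bar{N}).X_f\bigr) = \text{Ad}(\theta(N_c))\sum_{j=1}^{r} \mathbb{R}^{*}_{+} X_j.
\]
The starting input is the decomposition $X_f = H_K + \sum_j c_j X_j + \sum_j c_j \theta(X_j)$ coming from $\mathfrak{t} = \mathfrak{h}_k \oplus \bigoplus_j \mathbb{R}(X_j + \theta(X_j))$, where condition $(*)$ forces $c_j > 0$ for every $j$.

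The key tool is a grading $\mathfrak{g} = \bigoplus_{s \in \{0,\pm 1/2,\pm 1\}} \mathfrak{g}^{[s]}$ defined by assigning to a restricted root $\gamma$ the sum of its coordinates in the basis $\beta_1,\dots,\beta_r$ of $\mathfrak{a}^{*}$ (and $0$ to the trivial root). A direct inspection of the list of restricted roots recalled before Corollary~5.1 yields
\[
\mathfrak{g}^{[1]} = \mathfrak{n}_3, \qquad \mathfrak{g}^{[1/2]} = \bigoplus_{i} \mathfrak{g}_{\beta_i/2}, \qquad \mathfrak{g}^{[0]} = \mathfrak{m} \oplus \mathfrak{a} \oplus \mathfrak{n}_c \oplus \theta(\mathfrak{n}_c),
\]
while the negative layers are the $\theta$-images of the positive ones; in particular $\theta(\mathfrak{n}_2) = \mathfrak{g}^{[-1]} \oplus \mathfrak{g}^{[-1/2]}$. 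Crucially, $\text{ad}(\theta(\mathfrak{n}_2))$ \emph{strictly} lowers the grading (by at least $1/2$), whereas $\text{ad}(\mathfrak{a})$ and $\text{ad}(\theta(\mathfrak{n}_c))$ preserve it.

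For the computation, observe that $\theta(\mathfrak{n}_2)$ is an ideal of $\bar{\mathfrak{n}}$ (lifting the ideal $\mathfrak{n}_2 \subset \mathfrak{n}$) and is normalized by $A$, so every $g \in A\bar{N}$ admits a factorization $g = a \cdot n_c^{-} \cdot n_2^{-}$ with $a \in A$, $n_c^{-} \in \theta(N_c)$, $n_2^{-} \in \theta(N_2)$. Applying these factors to $X_f$ in turn: the $\mathfrak{g}^{[1]}$-component of $\text{Ad}(n_2^{-}).X_f$ equals that of $X_f$, namely $\sum_j c_j X_j$, since every higher-order term in the exponential series lies in strictly lower grading; $\text{Ad}(n_c^{-})$ preserves $\mathfrak{g}^{[1]} = \mathfrak{n}_3$; and pushing $\text{Ad}(a)$ past $\text{Ad}(n_c^{-})$ (using that $A$ normalizes $\theta(N_c)$) produces
\[
\text{pr}_{\mathfrak{n}_3}\bigl(\text{Ad}(g).X_f\bigr) = \text{Ad}(\tilde{n}_c^{-})\Bigl(\sum_{j} c_j e^{\beta_j(H)} X_j\Bigr),
\]
for $a = \exp H$ and some $\tilde{n}_c^{-} \in \theta(N_c)$. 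As $H$ varies over $\mathfrak{a}$ and $\tilde{n}_c^{-}$ over $\theta(N_c)$, the tuple $(c_j e^{\beta_j(H)})_j$ sweeps out $(\mathbb{R}^{*}_{+})^{r}$ because $\{\beta_j\}$ is a basis of $\mathfrak{a}^{*}$, which gives the desired image.

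The main obstacle is the combinatorial identification underlying the grading --- specifically, that $\mathfrak{n}_3$ coincides with the top layer $\mathfrak{g}^{[1]}$ and that $\theta(\mathfrak{n}_2)$ is contained in strictly negative layers. These are Hermitian-specific features that must be read directly off the explicit description of $\Phi_{\mathfrak{a}}$ recalled before Corollary~5.1. Once that is verified, the rest is a formal manipulation of exponential series together with the normalization relations between $A$, $\theta(N_c)$ and $\theta(N_2)$.
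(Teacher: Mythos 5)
Your argument is correct and follows essentially the same route as the paper: decompose $X_f = H_K + \sum_j c_j X_j + \sum_j c_j\theta(X_j)$ over $\mathfrak{t}$, pass to the adjoint picture via $\Theta$, factor $A\theta(N)$ through $A$, $\theta(N_c)$ and $\theta(N_2)=\theta(N_{1/2})\theta(N_3)$, and observe that the last factor cannot contribute to the $\mathfrak{n}_3$-component. Your half-integer grading is just a systematic repackaging of the paper's ad hoc observations that $\theta(X_j)$, $X_0$ are orthogonal to $\mathfrak{n}$ and that $\text{pr}_{\mathfrak{n}_3}\bigl(\text{Ad}(\theta(N_3))\text{Ad}(\theta(N_{1/2}))Y\bigr)=Y$ for $Y\in\mathfrak{n}_3$.
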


\begin{proof}

We can write $f:= X_f=\sum^{r}_{j=1}c_j(X_j+\theta(X_j))+X_0\in\mathfrak{t}$, where $X_j\in\mathfrak{g}_{\beta_j}$ is the same as the ones in (*) and $c_j>0$ and $X_0\in\mathfrak{m}$. Here $\mathfrak{m}$ is the centralizer of $\mathfrak{a}$ in $\mathfrak{k}$.

Now let $a\in A$, $n\in N$ and $Y\in\mathfrak{n}_3$. Then $\text{Ad}^*(an)f(Y)=f(\text{Ad}(an)^{-1}Y)=\langle X_f, \text{Ad}(an)^{-1}Y\rangle$.  On the other hand $\text{Ad}(an)^{-1}Y\in \mathfrak{n}_3$ and $\theta(X_j)$ and $X_0$ are orthogonal to $\mathfrak{n}_3$ (actually even to $\mathfrak{n}$). Then $\text{Ad}^*(an)f(Y)=\langle\sum^{r}_{j=1}c_jX_j,\text{Ad}(an)^{-1}Y\rangle$. Hence we deduce that $$\text{p}_1(\text{Ad}(AN)^*.f)\cong\text{pr}_{\mathfrak{n}_3}(\text{Ad}(A\theta(N))\sum^{r}_{j=1}c_jX_j)=\text{pr}_{\mathfrak{n}_3}(\text{Ad}(\theta(N))\sum^{r}_{j=1}\mathbb{R}^{*}_+X_j).$$ However $N=N_3.N_{\frac{1}{2}}N_c$ with $N_3:=\exp(\mathfrak{n}_3)$ and $N_{\frac{1}{2}}:=\exp(\bigoplus_{1\leq j\leq r}\mathfrak{g}_{\frac{1}{2}\beta_j})$. Then we have $$\text{pr}_{\mathfrak{n}_3}(\text{Ad}(\theta(N))\sum^{r}_{j=1}\mathbb{R}^{*}_+X_j)=\text{pr}_{\mathfrak{n}_3}(\text{Ad}(\theta(N_3))\text{Ad}(\theta(N_{\frac{1}{2}})).\text{Ad}(\theta(N_c))\sum^{r}_{j=1}\mathbb{R}^{*}_+X_j).$$ Nevertheless, it is clear that for any $Y\in\mathfrak{n}_3$, $\text{pr}_{\mathfrak{n}_3}(\text{Ad}(\theta(N_3))\text{Ad}(\theta(N_{\frac{1}{2}})).Y)=Y$. Then the proof follows.

\end{proof}

  It is known that $\mathfrak{n}_3$ carries the structure of an Euclidean Jordan-algebra. Let $\Omega^+$ be (up to sign) the associated open convex cone.
  Recall the construction of $\Omega^+$. For that let $\mathfrak{g}_0:=\mathfrak{a}\oplus\mathfrak{m}\oplus\bigoplus_{i\neq j}\mathfrak{g}_{\frac{1}{2}(\beta_i-\beta_j)}$ with $\mathfrak{m}$ the centralizer of $\mathfrak{a}$ in $\mathfrak{k}$. Let $G_0:=\exp(\mathfrak{g}_0)$. Then
  $$\Omega^+=\text{Ad}(G_0)\sum^{r}_{j=1}X_j=\text{Ad}(A\theta(N_c))\sum^{r}_{j=1}X_j=\text{Ad}(\theta(N_c))(\sum^{r}_{j=1}\mathbb{R}^{*}_+X_j).$$

  Then we have the following.
  \begin{corollary}
  $\text{p}_1(\text{Ad}(AN)^*.f)$ corresponds to $\Omega^+$.
 \end{corollary}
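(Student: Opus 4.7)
My plan is to read the statement off directly from Lemma 5.2 combined with the description of $\Omega^+$ recalled immediately above the corollary. Under the identification $\mathfrak{n}_3\cong\mathfrak{n}_3^*$ induced by $\langle.,.\rangle$ (cf.\ Lemma 5.1), Lemma 5.2 has already pinned down $\text{p}_1(\text{Ad}(AN)^*.f)$ as the subset
$$\text{Ad}(\theta(N_c))\sum^{r}_{j=1}\mathbb{R}^{*}_+X_j \; \subset\; \mathfrak{n}_3,$$
and the chain of equalities quoted just above the statement exhibits $\Omega^+$ as exactly the same set. So the corollary becomes a one-line consequence as soon as the description of $\Omega^+$ is accepted, and no further Lie-theoretic work is needed.

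If pressed to justify the recalled equality $\Omega^+=\text{Ad}(\theta(N_c))(\sum^{r}_{j=1}\mathbb{R}^{*}_+X_j)$ from scratch, rather than quoting Carmona, I would proceed as follows. The $X_j$ form a Jordan frame in the Euclidean Jordan algebra $\mathfrak{n}_3$, with $e:=\sum^{r}_{j=1}X_j$ playing the role of the Jordan unit, and $\Omega^+$ is by construction the orbit of $e$ under the identity component of the structure group of $\mathfrak{n}_3$, which is realized in our setup by $\text{Ad}(G_0)$. To compress $\text{Ad}(G_0)e$ down to $\text{Ad}(A\theta(N_c))e$, I would invoke an Iwasawa-type decomposition of $G_0$ together with the facts that $\text{Ad}(M)$ fixes $e$ (since $\mathfrak{m}$ centralizes $\mathfrak{a}$ and acts on $\mathfrak{n}_3$ preserving each $\mathbb{R}X_j$) and that $\text{Ad}(N_c)$ acts on $\mathfrak{n}_3$ by strictly upper-triangular operators relative to the frame, hence fixes $e$. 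The remaining reduction to $\sum^{r}_{j=1}\mathbb{R}^{*}_+X_j$ uses that $\text{Ad}(A)$ rescales each $X_j$ independently: this is exactly the content of the standing hypothesis $r=\dim\mathfrak{a}$, which says that the $\beta_j$ form a basis of $\mathfrak{a}^*$.

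The only content-bearing step---namely, explicitly computing $\text{p}_1(\text{Ad}(AN)^*.f)$---was carried out in Lemma 5.2, so I expect no real obstacle here; everything else is bookkeeping with Carmona's normalizations. As a bonus, writing the image as $\Omega^+$ makes it manifest that $\text{p}_1(\mathcal{O}_f)$ is open in $\mathfrak{n}_3^*$, which is precisely what is needed to feed into Corollary 5.1 and complete the ``$\Longrightarrow$'' direction of Theorem \ref{holo}.
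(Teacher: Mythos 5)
Your proof is correct and is exactly the paper's (implicit) argument: the corollary is an immediate juxtaposition of Lemma 5.2 with the displayed description $\Omega^+=\text{Ad}(\theta(N_c))(\sum^{r}_{j=1}\mathbb{R}^{*}_+X_j)$, and your from-scratch justification of that description is a harmless extra. One caution on your closing remark: this corollary only gives $\Omega^+=\text{p}_1(\text{Ad}(AN)^*.f)\subseteq\text{p}_1(\mathcal{O}_f)$, and the reverse inclusion (needed to conclude that $\text{p}_1(\mathcal{O}_f)$ is an open orbit and feed into Corollary 5.1) still requires the Paneitz--Vinberg characterization of $C_{\max}$ and the self-duality of $\overline{\Omega^+}$ used in the subsequent two corollaries.
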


Next we will prove our main theorem based on the fine geometry of convex cones in the simple Lie algebra of Hermitian type.

Thus let $\Delta_n^+$ be one of the two holomorphic subsets of non compact roots.

Define $c_{\max}:=\{X\in \mathfrak{t}: \forall \alpha\in\Delta_n^+, i\alpha(X)>0\}$. Then $X_f\in\pm c_{\max}$.   It is known that $C_{\max}:=\text{Ad}(G)c_{\max}$ is a proper maximal $\text{Ad}(G)$-invariant open convex cone in $\mathfrak{g}$ (see \cite{Ne}). Without loss of generality, we can assume $\mathcal{O}_{X_f}:=\text{Ad}(G).X_f\subset C_{\max}$.  recall that $X_j\in\mathfrak{g}_{\beta_j}$
 are those in (*). Since $X_1$ is a highest weight vector for $\text{Ad}$-representation of $G$ on $\mathfrak{g}$,  we have the
 following characterization of $C_{\max}$ due to  Paneitz-Vinberg (see Theorem 2.1.21 in \cite{HO}).

  $$C_{\max}=\{X\in\mathfrak{g}: \langle X, \text{Ad}(g).X_1\rangle>0, \forall g\in G\}.$$

  Hence as each $X_j$ is conjugate to $X_1$ via Weyl group (up to a positive scalar), we deduce the following

   \begin{corollary}
      For each $Y\in C_{\max}$, we have $\langle Y, \Omega^+ \rangle>0$.

   \end{corollary}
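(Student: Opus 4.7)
The plan is to combine the Paneitz--Vinberg characterization of $C_{\max}$ just recalled with the Weyl-group conjugacy of the $X_j$'s, and then extend the resulting one-vector inequality linearly across the cone $\Omega^{+}$.

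First I would unpack an arbitrary element of $\Omega^{+}$. From the description $\Omega^{+}=\text{Ad}(\theta(N_c))\bigl(\sum_{j=1}^{r}\mathbb{R}^{*}_{+}X_j\bigr)$ recorded just above, any $\omega\in\Omega^{+}$ has the form $\omega=\text{Ad}(\theta(n_c))\sum_{j=1}^{r}t_j X_j$ with $n_c\in N_c$ and $t_j>0$. By bilinearity of $\langle\cdot,\cdot\rangle$, it is then enough to show that $\langle Y,\text{Ad}(\theta(n_c))X_j\rangle>0$ for every index $j$ and every $Y\in C_{\max}$.

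Next I would use the stated Weyl-group conjugacy to reduce to a statement about $X_1$ alone. Because $W_K=N_K(\mathfrak{t})/Z_K(\mathfrak{t})$ admits a standard lift into $K\subset G$, the assertion that $X_j$ is $W_K$-conjugate to $X_1$ up to a positive scalar yields a representative $w_j\in K$ and a constant $c_j>0$ with $X_j=c_j\,\text{Ad}(w_j)X_1$. Substituting this gives $\langle Y,\text{Ad}(\theta(n_c))X_j\rangle=c_j\,\langle Y,\text{Ad}(\theta(n_c)w_j)X_1\rangle$, and the element $\theta(n_c)w_j$ lies in $G$. The Paneitz--Vinberg characterization of $C_{\max}$ therefore forces $\langle Y,\text{Ad}(\theta(n_c)w_j)X_1\rangle>0$; multiplying by $c_j>0$ and then summing over $j$ with the positive weights $t_j$ produces $\langle Y,\omega\rangle>0$, which is precisely the desired conclusion.

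The argument is essentially formal once both ingredients are in hand, so I do not foresee a substantial obstacle. The one point worth double-checking is that the Weyl-group element implementing the identification of $X_1$ with $c_j^{-1}X_j$ can genuinely be taken inside $G$ (so that Paneitz--Vinberg, which quantifies over $g\in G$, applies); this is automatic through the usual realization $W_K\hookrightarrow N_K(\mathfrak{t})\subset K\subset G$.
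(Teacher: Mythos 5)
Your argument is correct and is exactly the one the paper intends: the paper deduces the corollary directly from the Paneitz--Vinberg characterization of $C_{\max}$ together with the decomposition $\Omega^{+}=\text{Ad}(\theta(N_c))\bigl(\sum_{j}\mathbb{R}^{*}_{+}X_j\bigr)$ and the Weyl-conjugacy of the $X_j$ to $X_1$, just as you do. The only cosmetic point is that the relevant Weyl group here is most naturally the restricted Weyl group $N_K(\mathfrak{a})/Z_K(\mathfrak{a})$ permuting the strongly orthogonal restricted roots $\beta_j$ (rather than $W_K$ acting on $\mathfrak{t}$), but in either reading the conjugating representative lies in $K\subset G$, which is all your application of Paneitz--Vinberg requires.
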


   Now in order to conclude $\text{p}_1(\mathcal{O}_f)=\Omega^+$ (then our theorem is proved) we prove the following.

   \begin{corollary}
          $\text{pr}_{\mathfrak{n}_3}(C_{\max})= \Omega^+ $.

   \end{corollary}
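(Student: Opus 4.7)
The plan is to prove the two inclusions $\Omega^+\subseteq \text{pr}_{\mathfrak{n}_3}(C_{\max})$ and $\text{pr}_{\mathfrak{n}_3}(C_{\max})\subseteq \Omega^+$ separately. For the first, I would use Corollary 5.2 together with the fact that $\mathcal{O}_{X_f}\subseteq C_{\max}$. For the second, I would combine Corollary 5.3 with the classical self-duality of the Jordan cone $\Omega^+$ and the openness of the linear projection $\text{pr}_{\mathfrak{n}_3}$.

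For $\Omega^+\subseteq \text{pr}_{\mathfrak{n}_3}(C_{\max})$: since $C_{\max}$ is $\text{Ad}(G)$-invariant and contains $\mathcal{O}_{X_f}$, it contains $\text{Ad}(A\theta(N)).X_f$. Via Lemma 5.1, the combined content of Lemma 5.2 and Corollary 5.2 reads exactly as $\text{pr}_{\mathfrak{n}_3}(\text{Ad}(A\theta(N)).X_f)=\Omega^+$, giving the inclusion.

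For the reverse inclusion, Corollary 5.3 says that $\langle Y,Z\rangle>0$ for every $Y\in C_{\max}$ and every $Z\in\Omega^+$; since $Z\in\mathfrak{n}_3$ and $\text{pr}_{\mathfrak{n}_3}$ is $\langle\cdot,\cdot\rangle$-orthogonal, this rewrites as $\langle \text{pr}_{\mathfrak{n}_3}(Y),Z\rangle>0$ for all $Z\in\Omega^+$, placing $\text{pr}_{\mathfrak{n}_3}(Y)$ in the closed dual cone of $\Omega^+$ in $\mathfrak{n}_3$. Invoking the classical self-duality of the symmetric cone of a Euclidean Jordan algebra (with respect to the induced inner product), this closed dual cone is exactly $\overline{\Omega^+}$, so $\text{pr}_{\mathfrak{n}_3}(C_{\max})\subseteq \overline{\Omega^+}$. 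Since $\text{pr}_{\mathfrak{n}_3}:\mathfrak{g}\to\mathfrak{n}_3$ is a surjective linear map, it is an open map; thus $\text{pr}_{\mathfrak{n}_3}(C_{\max})$ is open in $\mathfrak{n}_3$, and any open subset of $\overline{\Omega^+}$ is automatically contained in its interior $\Omega^+$, completing the proof.

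The only delicate step is the self-duality of $\Omega^+$ with respect to the specific inner product $\langle\cdot,\cdot\rangle=-\mathcal{K}(\cdot,\theta(\cdot))$ inherited from $\mathfrak{g}$, rather than the intrinsic Jordan trace form on $\mathfrak{n}_3$. Since $\mathfrak{g}$ is simple, $\mathfrak{n}_3$ is an irreducible Euclidean Jordan algebra, so any two $G_0$-invariant positive definite inner products on $\mathfrak{n}_3$ are proportional; hence self-duality for the trace form (Faraut-Koranyi) transfers to $\langle\cdot,\cdot\rangle$. Everything else is a routine assembly of the previously established lemmas and corollaries.
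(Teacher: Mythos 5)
Your proposal is correct and follows essentially the same route as the paper: the inclusion $\Omega^+\subseteq \text{pr}_{\mathfrak{n}_3}(C_{\max})$ from Lemma 5.2 and Corollary 5.2, and the reverse inclusion from Corollary 5.3, the self-duality of $\overline{\Omega^+}$, and the openness of the projection (the paper's proof is exactly this, stated more tersely). Your extra remark justifying self-duality with respect to the Killing-form inner product rather than the Jordan trace form is a legitimate refinement of a point the paper leaves implicit, but it does not change the argument.
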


   \begin{proof}

   Firstly, $\text{pr}_{\mathfrak{n}_3}(C_{\max})\supseteq \Omega^+ $ follows from Corollary 5.2 and  Lemma 5.2.

   Next it is known that the closure of $\Omega^+$, $\overline{\Omega^+}$ is self-dual (see \cite{FK}): i.e., $X\in\overline{\Omega^+}$
    if and only if $\langle X, \overline{\Omega^+} \rangle\geq0$. Then according to the previous corollary, we have
    $\text{pr}_{\mathfrak{n}_3}(C_{\max})\subset\overline{\Omega^+}$. But $C_{\max}$ is open and $\text{pr}_{\mathfrak{n}_3}$ is an open map. Hence we deduce that $\text{pr}_{\mathfrak{n}_3}(C_{\max})\subseteq \Omega^+ $.

   \end{proof}

\noindent \textbf{Remark}. Since $G$ is simple of Hermitian type, the set of strongly elliptic and holomorphic elements has two connected components $\pm\Psi^+$ (actually $\Psi^+\cong C_{\max}$). Since $\Psi^+$ is union of strongly elliptic and holomorphic $G$-orbits, a simple topological argument implies that $\text{p}(\Psi^{+})=\Omega_{+}$, where $\Omega_{+}$ is an open $AN$-orbit in $(\mathfrak{a}\oplus\mathfrak{n})^*$. In other words, among many open $AN$-orbits in $(\mathfrak{a}\oplus\mathfrak{n})^*$, there are only two and exactly two opposite open orbits onto which the cone of strongly elliptic and holomorphic elements  in $\mathfrak{g}^*$ are projected.

\subsection*{Acknowledgements}  I would particularly like to thank Prof. Kr\"{o}tz for his crucial help for convex cone theory which is essential for our general proof. I also thank Prof. Duflo with whom I had useful discussions for communicating Carmona's unpublished paper to us. I would like to thank Prof. Torasso for useful discussions, some of his ideas are helpful for our work. Finally, my thanks goes to Prof. Hilgert for comments and discussions on a preliminary version of the paper.

\end{document}